\newtheorem*{theorem*}{Theorem}
\newtheorem*{question*}{Question}
\newtheorem*{corollary*}{Corollary}
\newtheorem{theorem}[subsubsection]{Theorem}
\newtheorem{lemma}[theorem]{Lemma}
\newtheorem{proposition}[theorem]{Proposition}
\newtheorem{corollary}[theorem]{Corollary}
\theoremstyle{definition}
\newtheorem{remark}[theorem]{Remark}
\newtheorem{example}[theorem]{Example}
\newtheorem{question}[theorem]{Question}
\newcommand{\Q}{\mathbb Q}
\newcommand{\cU}{\mathcal U}
\newcommand{\cL}{\mathcal{L}}
\newcommand{\res}{\mathrm{res}}
\newcommand{\Div}{\mathrm{div}}
\newcommand{\Def}{\mathrm{def}}
\newcommand{\cc}{\mathbf{c}}
\newcommand{\sS}{\mathbf{D}}
\newcommand{\eq}{\mathrm{eq}}
\newcommand{\dcl}{\mathrm{dcl}}
\newcommand{\Deff}{\mathrm{def}}
\newcommand{\acl}{\mathrm{acl}}
\newcommand{\ACVF}{\mathrm{ACVF}}
\newcommand{\DOAG}{\mathrm{DOAG}}
\newcommand{\RCVF}{\mathrm{RCVF}}
\newcommand{\PCF}{p\mathrm{CF}}
\newcommand{\CODF}{\mathrm{CODF}}
\newcommand{\Th}{\mathrm{Th}}
\newcommand{\D}[1]{{S^\mathrm{def}_{#1}}}
\newcommand{\tp}{\mathrm{tp}}
\newcommand{\SE}{\mathcal{S}\mathcal{E}}
\def\widebreve{\mathpalette\wide@breve}
\def\wide@breve#1#2{\sbox\z@{$#1#2$}%
     \mathop{\vbox{\m@th\ialign{##\crcr
\kern0.08em\brevefill#1{0.8\wd\z@}\crcr\noalign{\nointerlineskip}%
                    $\hss#1#2\hss$\crcr}}}\limits}
\def\brevefill#1#2{$\m@th\sbox\tw@{$#1($}%
  \hss\resizebox{#2}{\wd\tw@}{\rotatebox[origin=c]{90}{\upshape(}}\hss$}
\title{Pro-definability of spaces of definable types}
\author[Pablo Cubides Kovascics]{Pablo Cubides Kovacsics}
\address{Pablo Cubides Kovacsics, Mathematisches Institut der Heinrich-Heine-Universit\"at D\"usseldorf, 
Universit\"atsstr. 1, 40225 D\"usseldorf, Germany. }
\email{cubidesk@hhu.de}
\author[Jinhe Ye]{Jinhe Ye}
\address{Jinhe Ye, University of Notre Dame\\
Department of Mathematics\\
255 Hurley, Notre Dame\\
IN 46556, USA.
}
\email{jye@nd.edu}
\subjclass[2010]{Primary 12L12, Secondary 03C64, 12J25}
\keywords{Pro-definability, definable types, stably embedded pairs}
\begin{document}
\setcounter{secnumdepth}{4}

\maketitle

\begin{abstract} We show pro-definability of spaces of definable types in various classical complete first order theories, including complete o-minimal theories, Presburger arithmetic, $p$-adically closed fields, real closed and algebraically closed valued fields and closed ordered differential fields. Furthermore, we prove pro-definability of other distinguished subspaces, some of which have an interesting geometric interpretation. 

Our general strategy consists in showing that definable types are uniformly definable, a property which implies pro-definability using an argument due to E. Hrushovski and F. Loeser. Uniform definability of definable types is finally achieved by studying classes of stably embedded pairs. 
\end{abstract}

\setcounter{tocdepth}{1}



\newcounter{eqn}

\normalem 
\section{Introduction}

In \cite{HL}, building on the model theory of algebraically closed valued fields ($\ACVF$), E. Hrushovski and F. Loeser developed a theory which provides a model-theoretic account of the Berkovich analytification of algebraic varieties. Informally, given a complete non-archimedean rank 1 valued field $k$ and an algebraic variety $X$ over $k$, they showed how the space of generically stable types on $X$ over a large algebraically closed valued field extending $k$, gives a model-theoretic avatar of the analytification $X^\mathrm{an}$ of $X$. Most notably, their association allowed them to obtain results concerning the homotopy type of quasi-projective varieties which were only known under strong algebro-geometric hypothesis on $X$ by results in \cite{Berkovich_contractible}.

One of the difficulties to study Berkovich spaces from a model-theoretic point of view is that such spaces do not seem to generally have (in $\ACVF$)  the structure of a definable set --where usual model-theoretic techniques can be applied-- but rather canonically the structure of a space of types. Part of the novelty of Hrushovski-Loeser's work lies on the fact that their spaces can be equipped with the structure of a (strict) pro-definable set, which granted them back the use of different classical model-theoretic tools. It is thus tempting to ask if such a structural result holds for other distinguished subsets of definable types and even for other first-order theories. It turned out this question is closely related to classical topics in model theory such as the model theory of pairs and uniform definability of types. In this article we give a positive answer in various contexts. Formally, we obtain the following result

\begin{theorem*}[Later Theorem \ref{thm:prodef}] Let $T$ be one of the following theories: 
\begin{enumerate}
    \item an complete o-minimal theory; 
    \item Presburger arithmetic; 
    \item the theory of a finite extension of $\Q_p$ ($\PCF_d$); 
    \item the theory of real closed valued fields $\RCVF$; 
    \item a completion of the theory of algebraically closed valued fields $\ACVF$;
    \item the theory of closed ordered differential fields $\CODF$.
\end{enumerate}
Then, definable types over models of $T$ are uniformly definable. In particular, for every model $M$ of $T$ and every $M$-definable set $X$, the space $\D{X}(M)$ of definable types concentrating on $X$ is pro-definable in $\cL^{\eq}$ (or any reduct in which $T$ admits elimination of imaginaries). 
\end{theorem*}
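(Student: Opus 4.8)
The argument splits naturally into a soft part, deriving pro-definability from uniform definability, and a hard part, establishing uniform definability itself. For the soft part --- the "in particular" clause --- I would run the inverse-limit construction of Hrushovski and Loeser. Fix $M \models T$ and an $M$-definable set $X$. Uniform definability provides, for each $\cL$-formula $\varphi(x;y)$, a single scheme $\theta_\varphi(y;z_\varphi)$ (with $z_\varphi$ ranging in a fixed product of sorts) such that the $\varphi$-definition of every $p \in \D{X}(M)$ has the form $\theta_\varphi(y;c)$. Working in $\cL^{\eq}$, elimination of imaginaries lets me replace $c$ by the canonical code $c_\varphi(p)$ of the definable set $\{b : \varphi(x,b)\in p\}$, so that $p \mapsto (c_\varphi(p))_\varphi$ is injective. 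For a finite set $\Delta$ of formulas I set $Y_\Delta := \{(c_\varphi(p))_{\varphi\in\Delta} : p \in \D{X}(M)\}$; the substance of uniform definability is precisely that each $Y_\Delta$ is definable, uniformly in $M$, membership being the assertion that the given codes extend to a consistent, $X$-concentrated $\Delta$-definition, which is first-order because boundedly many schemes $\theta_\varphi$ are involved. The coordinate projections $Y_{\Delta'} \to Y_\Delta$ for $\Delta \subseteq \Delta'$ are definable, and $\D{X} = \varprojlim_\Delta Y_\Delta$ realizes the space as a (strict) pro-definable set.

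Everything therefore reduces to uniform definability, which I would obtain through the model theory of stably embedded pairs. Working in $\cL_P = \cL \cup \{P\}$, let $\SE$ denote the class of pairs $(N,M)$ with $M = P(N) \prec N$ and $M$ stably embedded in $N$. The bridge is the equivalence: a type $p \in S_x(M)$ is definable if and only if it is realized by some $a$ in a pair $(N,M) \in \SE$. Indeed, if $M$ is stably embedded in $N$ and $a \models p$, then each trace $\{b \in M : N \models \varphi(a,b)\}$ is $M$-definable, so $p$ is definable; conversely, a definable $p$ can be realized inside a pair in which $M$ remains stably embedded, e.g. in a suitably generated model over $Ma$ --- the point that needs care being exactly the preservation of stable embeddedness.

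The uniformity I need then follows once $\SE$ is shown to be elementary. Granting this, a compactness argument upgrades stable embeddedness to its uniform form: for each $\varphi(x;y)$ there is an $\cL$-formula $d\varphi(y;z)$ such that in every pair of $\SE$ and every $a \in N$ the trace $\{b \in M : N \models \varphi(a,b)\}$ is defined by $d\varphi(y;c)$ for some $c \in M$ (otherwise one compactly produces a member of $\SE$ with an element whose trace is definable by no scheme, contradicting stable embeddedness there). Transporting $d\varphi$ back across the bridge yields a single $\varphi$-definition scheme valid for all definable types simultaneously, which is uniform definability; the codes $c_\varphi$ land in the sorts prescribed by $d\varphi$, which is what feeds the construction above (and one checks these codes live in the reduct whenever EI is only available there).

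The remaining, and genuinely hard, work is the elementarity of $\SE$ for each theory on the list, where the specific geometry enters. For complete o-minimal $T$ it rests on the Marker--Steinhorn theorem and its uniform refinements, identifying stably embedded pairs with the Dedekind-complete ones; Presburger arithmetic and $\PCF_d$ are treated via their cell decompositions and the resulting explicit definitions of cuts. The valued-field cases $\RCVF$ and $\ACVF$, together with $\CODF$, are the main obstacle: there one must analyze stably embedded pairs of valued (respectively ordered differential) fields directly, controlling residue field, value group, and --- in $\CODF$ --- the derivation, and verify that the pair condition is first-order. This is the technical heart of the paper, and for $\CODF$ I expect it to proceed by transferring the problem to the underlying real closed (valued) field.
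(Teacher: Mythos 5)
Your overall architecture coincides with the paper's: uniform definability of types is extracted, via a compactness argument in the language of pairs, from the elementarity of the class $\SE(T)$ of stably embedded pairs together with the fact that every definable type is realized in such a pair; pro-definability then follows by the Hrushovski--Loeser coding of types by their definition-scheme parameters. Your compactness step (producing, from the failure of any single scheme to work, a pair in $\SE(T)$ containing an element whose trace on $M$ is defined by no scheme) is exactly the paper's Theorem \ref{thm:UD2}, and your case-by-case remarks on elementarity of $\SE(T)$ point in the right directions (Marker--Steinhorn for o-minimal theories, end-extensions for Presburger, a valuation-theoretic analysis via value group and residue field for $\RCVF$, $\ACVF$, $\PCF_d$, and a transfer to real closed fields for $\CODF$, where one must additionally extend the derivation to the stably embedded real closed extension).

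There is, however, one genuine error in your soft part. You set $Y_\Delta:=\{(c_\varphi(p))_{\varphi\in\Delta} : p\in \D{X}(M)\}$ and assert that each $Y_\Delta$ is definable, concluding that $\D{X}=\varprojlim_\Delta Y_\Delta$ is a \emph{strict} pro-definable set. The definability of these exact images is precisely what strictness requires, and it does not follow from uniform definability: your justification conflates the set of code tuples satisfying the finitely many first-order consistency conditions indexed by $\Delta$ (which is definable) with the set of code tuples that actually arise as restrictions of global definable types concentrating on $X$ (which need not be definable, since a consistent partial scheme need not extend to a full coherent one). The paper explicitly flags strictness as a subtler issue deferred to later work, and instead exhibits $\tau(\D{X}(M))$ as a $*$-definable subset of the small product $\prod_\varphi \sS_{z_\varphi}$, cut out by the conditions $\theta_\varphi(z_\varphi)\coloneqq(\forall y)(\exists x)(\varphi(x,y)\leftrightarrow d(\varphi)(y,z_\varphi))$ after arranging that the scheme $d$ factors through Boolean combinations; consistency of the resulting type is then a compactness argument using the conjunction-compatibility of $d$. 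Replacing your $Y_\Delta$ by these definable supersets repairs the argument for pro-definability, but the strictness claim should be withdrawn. A second, smaller point: the equivalence ``$p$ definable iff realized in a stably embedded pair'' needs the reverse implication verified theory by theory (e.g.\ $N=M(a)$ in the o-minimal case, $N=\acl(Ma)$ for the valued-field cases, and Kaplan's derivation-extension result for $\CODF$); you correctly identify this as the point needing care but leave it unproved.
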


The fact that pro-definability follows from  uniform definability of types goes back to an argument of E. Hrushovski and F. Loeser in \cite{HL} which we present in Proposition \ref{prop:pro_def_eq}. In return, uniform definability of types is obtained from the following criterion which establishes a relation between this property and stably embedded pairs of models of $T$. 

\begin{theorem*}[Later Corollary \ref{thm:UD}] Let $T$ be an $\cL$-theory such that 
\begin{enumerate}
\item the theory of stably embedded pairs of models of $T$ is elementary in the language of pairs $\cL_P$; 
\item for every small model $M\models T$ and every definable type $p\in \D{x}(M)$, there is a stably embedded $(N,M)$ such that $P$ is realized in $N$.
\end{enumerate}
Then $T$ has uniform definability of types.  
\end{theorem*}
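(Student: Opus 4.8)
The plan is to use the two hypotheses to run a compactness argument \emph{inside} the elementary class of stably embedded pairs, and then transfer the resulting uniform definition back to arbitrary definable types. Recall that uniform definability of types asks, for each $\cL$-formula $\varphi(x,y)$, for a single $\cL$-formula $\psi_\varphi(y,z)$ such that for every $M\models T$ and every $p\in\D{x}(M)$ there is $c\in M$ with $d_p\varphi(y)\equiv\psi_\varphi(y,c)$. So I fix $\varphi(x,y)$ once and for all and write $T_P$ for the ($\cL_P$-elementary, by hypothesis (1)) theory of stably embedded pairs.

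The first and main step takes place entirely in $T_P$. In any $(N,M)\models T_P$ and for any tuple $a\in N^{|x|}$, stable embeddedness of $M$ forces the trace $\varphi(a,M):=\{b\in M^{|y|}:N\models\varphi(a,b)\}$, which is an $N$-definable subset of $M^{|y|}$, to be $M$-definable, say $\varphi(a,M)=\theta(M,c)$ for some $\cL$-formula $\theta(y,w)$ and $c\in M^{|w|}$. The point is to make the list of formulas $\theta$ finite and independent of the pair. To this end introduce a constant tuple $\bar a$ for the variable $x$ and, for each $\cL$-formula $\theta(y,w)$, the $\cL_P$-sentence
\[
\sigma_\theta \;:=\; \exists w\,\Big(\textstyle\bigwedge_j P(w_j)\wedge\forall y\,\big(\textstyle\bigwedge_k P(y_k)\to(\varphi(\bar a,y)\leftrightarrow\theta(y,w))\big)\Big),
\]
asserting that $\varphi(\bar a,P)$ is defined over $P$ by an instance of $\theta$. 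By the previous remark every model of $T_P$ (with its distinguished $\bar a$) satisfies some $\sigma_\theta$, so $T_P\cup\{\neg\sigma_\theta:\theta\in\cL\}$ is inconsistent. Compactness yields finitely many $\theta_1,\dots,\theta_n$ with $T_P\models\sigma_{\theta_1}\vee\cdots\vee\sigma_{\theta_n}$, and since $\bar a$ does not occur in $T_P$ we may replace it by $x$ and universally quantify. Thus for every $(N,M)\models T_P$ and every $a\in N^{|x|}$ there are $i\le n$ and $c\in M$ with $\varphi(a,M)=\theta_i(M,c)$.

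The second step combines the finite list into one formula and transfers to definable types. Packaging $\theta_1,\dots,\theta_n$ into a single $\psi_\varphi(y,z)$ is routine: enlarging $z$ by a selector coordinate and using that the structures in question are infinite (or passing to $\cL^\eq$, where the finite index set is canonically coded) one sets $\psi_\varphi(y,z):=\bigvee_{i=1}^n(\chi_i(z)\wedge\theta_i(y,z_i))$, where $\chi_i$ is the formula on the selector coordinate picking out index $i$. Now let $M\models T$ and $p\in\D{x}(M)$ be arbitrary. Hypothesis (2) provides a stably embedded $(N,M)\models T_P$ and a realization $a\in N^{|x|}$ of $p$. For every $b\in M^{|y|}$ we have $\varphi(x,b)\in p\iff N\models\varphi(a,b)$, so $d_p\varphi(M)=\varphi(a,M)$; by the first step this equals $\theta_i(M,c)$ for some $i$ and some $c\in M$, hence $\psi_\varphi(M,c')$ for a suitable $c'$. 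Therefore $\psi_\varphi(y,c')$ is a $\varphi$-definition of $p$, and since $\psi_\varphi$ depends only on $\varphi$ this is exactly uniform definability of types; ranging over all $\varphi$ finishes the argument.

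I expect the only genuine obstacle to be the compactness step, where elementarity of $T_P$ (hypothesis (1)) is essential: without it there is no theory over which to run compactness, and stable embeddedness alone would give definability pair-by-pair with no control on the complexity of the defining formula. Hypothesis (2) plays the complementary, equally indispensable role of ensuring that every definable type is \emph{visible} inside such a pair, so that the uniform bound obtained for pairs genuinely applies to all of $\D{x}(M)$. The passage from finitely many schemes to a single formula is mere bookkeeping and can be sidestepped altogether if one formulates uniform definability via finite families of formulas.
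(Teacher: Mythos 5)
Your proof is correct and is essentially the paper's own argument run in the dual direction: the paper (via the more general Theorem \ref{thm:UD2}) supposes no finite family of schemes works, forms the finitely satisfiable set $\Sigma(x)$ consisting of the negations of your sentences $\sigma_\theta$ together with the theory of the pair class, and realizes it in a stably embedded pair to contradict definability of $tp(a/M)$, whereas you apply compactness directly to $T_P\cup\{\neg\sigma_\theta:\theta\}$ to extract the finite list $\theta_1,\dots,\theta_n$ and only then invoke hypothesis (2) in the transfer step. The underlying mathematics --- a compactness argument over the $\cL_P$-elementary class of stably embedded pairs followed by the finite-packaging step (the paper's Lemma \ref{lem:UD_finite}) --- is the same.
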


The previous criterion is derived from a slightly more general version (see Theorem \ref{thm:UD2}). 
As a corollary we also obtain pro-definability of some distinguished subspaces of the space of definable types, some of which have an interesting geometric interpretation. In particular, we aim to show that there are spaces of definable types in $\ACVF$ that can mimic Huber's analytification of an algebraic variety in a similar way the space of generically stable types mimics its analytification in the sense of Berkovich. We hope this can serve as a basis towards a model theory of adic spaces. In the same spirit, working in $\RCVF$, there are spaces of definable types which can be seen as the model-theoretic counterpart of the analytification of semi-algebraic sets as recently defined by P. Jell, C. Scheiderer and J. Yu in \cite{jell_etal}. This is also very much related to the work of C. Ealy, D. Haskell and J. Ma\v{r}\'ikov\'a in \cite{cli-has-mari} on residue field domination in real closed valued fields. This article aims to lay down a foundation for a model-theoretic study of such spaces. In a sequel, we will further explore structural properties of some of these spaces.

A natural question to ask is if such spaces are also strict pro-definable (i.e., pro-definable with surjective transition maps). Obtaining strictness is much more subtle and is often related to completions of theories of stably embedded pairs. Results towards a positive answer to this question will also be addressed in subsequent work. 

The article is laid out as follows. Section \ref{sec:prelim} sets up the notation and contains the needed model theoretic background. Spaces of definable types are introduced in Section \ref{sec:deftypes}. In Section \ref{sec:prodef}, we formally define what it means for spaces of definable types to be pro-definable and we show how this property is deduced from uniform definability of types (Proposition \ref{prop:pro_def_eq}. Section \ref{sec:SEP} focuses on stably embedded pairs and its main objective is to show that the class of such pairs is elementary for all theories above listed (see Theorem \ref{thm:definabletypes}. Section \ref{sec:UD} contains the proofs of the two theorems above and gathers a couple of open questions.  

\subsection*{Acknowledgements} We would like to thank: Sergei Starchenko and Kobi Peterzil for sharing with us the idea of using pairs as an approach to study spaces of definable types; Martin Hils for interesting discussions around the subject; Ludomir Newelski for sharing with us Example \ref{example:newelski}; and Elliot Kaplan for pointing us out a proof of Theorem \ref{thm:UDlist} for $\CODF$. 


P. Cubides Kovacsics was partially supported by the ERC project TOSSIBERG (Grant Agreement 637027), ERC project MOTMELSUM (Grant agreement 615722) and individual research grant
\emph{Archimedische und nicht-archimedische Stratifizierungen h\"oherer
Ordnung}, funded by the DFG. J. Ye was partially supported by NSF research grant DMS1500671.

\section{Preliminaries and notation}\label{sec:prelim}

\subsection{Model theoretic background}\label{sec:back_model}

We will use standard model-theoretic notation with the following specific conventions. 

Let $\cL$ be a first order language (possibly multi-sorted) and $T$ be a complete $\cL$-theory. The sorts of $\cL$ are denoted by bold letters $\sS$. Given a variable $x$, we let $\sS_x$ denote the sort where $x$ ranges. If $M$ is a model of $T$ and $\sS$ be a sort of $\cL$, we let $\sS(M)$ denote the set of elements of $M$ which are of sort $\sS$. Given an ordered tuple of variables $x=(x_i)_{i\in I}$ (possibly infinite), we extend this notation and set
\[
\sS_x(M)=\prod_{i\in I} \sS_{x_i}(M)
\]
By a subset of $M$ we mean a subset of , where $\sS$ is any sort in $\cL$. Let $C$ be a subset of $M$ (i.e. the union of all $\sS(M)$, $\sS$ a $\cL$-sort). The language $\cL(C)$ is the language $\cL$ together with constant symbols for every element in $C$. Given an $\cL(C)$-definable subset $X\subseteq \sS_x(M)$, we say that a type $p\in S_x(C)$ \emph{concentrates on $X$} if $p$ contains a formula defining $X$. We denote by $S_X(C)$ the subset of $S_x(C)$ consisting of those types concentrating on $X$. For a $C$-definable function $f\colon X\to Y$ the pushforward of $f$ is the function 
\[
f_*\colon S_X(C)\to S_Y(C) \hspace{1cm}  tp(a/C)\mapsto tp(f(a)/C). \] 
We let $\mathcal{U}$ be a monster model of $T$. As usual, a set is said to be \emph{small} if it is of cardinality smaller than $|\cU|$. A type $p(x)$ is a \emph{global type} if $p\in S_x(\cU)$. Following Shelah's terminology, a subset $X\subseteq \sS_x(M)$ is $*$-$C$-definable if there is a small collection $\Theta$ of $\cL(C)$-formulas $\varphi(x)$ (where only finitely many $x_i$ occur in each formula) such that $X=\{a\in \sS_x(M)\mid \cU\models \varphi(a), \varphi\in \Theta\}$. When the tuple of variables is finite, we also say $X$ is $\infty$-definable (or type-definable). 

We let $\dcl$ and $\acl$ denote the usual definable and algebraic closure model-thoeretic operators. Given a tuple of variables $x$ of length $n$, a $C$-definable set $X\subseteq \sS_x(\cU)$ and a subset $A$ of $\cU$, we let $X(A)\coloneqq X\cap \dcl(A)^n$.  

\subsection{Definable types}\label{sec:definabletype} We will mainly study definable types over models of $T$. Given a model $M$ of $T$ and a subset $A\subseteq M$, recall that a type $p\in S_x(M)$ is \emph{$A$-definable} (or definable over $A$) if for every $\cL$-formula $\varphi(x,y)$ there is an $\cL(A)$-formula $d_p(\varphi)(y)$ such that for every $c\in \sS_y(M)$
\begin{equation}\label{eq:defscheme}\stepcounter{eqn}\tag{E\arabic{eqn}}
\varphi(x,c)\in p(x) \Leftrightarrow \ M\models d_p(\varphi)(c). 
\end{equation}
The map $\varphi(x,y)\mapsto d_p(\varphi)(y)$ is called a \emph{scheme of definition for $p$}, and the formula $d_p(\varphi)(y)$ is called a \emph{$\varphi$-definition for $p$}. We say $p\in S_x(M)$ is \emph{definable} if it is $M$-definable. Given any set $B$ containing $M$, there is a canonical extension of $p$ to $S_x(B)$, denoted by $p|B$ and given by
\[
p|B\coloneqq \{\varphi(x,b)\mid N\models d_p(\varphi)(b)\},  
\]
where $N$ is any model of $T$ containing $B$. We refer the reader to \cite[Section 1]{pillay83} for proofs and details of these facts. The following lemma is also folklore. 

\begin{lemma}\label{lem:general_def_types} If $tp(a_1/M)$ is definable and $a_2\in \acl(Ma_1)$, then $tp(a_2/M)$ is definable. \qed
\end{lemma}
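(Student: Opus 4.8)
The plan is to express membership in $\tp(a_2/M)$ as a statement about the entire finite orbit of $a_2$ over $Ma_1$, which will be visible to the definition scheme of $\tp(a_1/M)$ even though $a_2$ itself is not a definable function of $a_1$.

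First I would use that $a_2\in\acl(Ma_1)$ to choose an $\cL(M)$-formula $\theta(x_1,x_2)$ with $\cU\models\theta(a_1,a_2)$ such that $\theta(a_1,\cU)$ is finite of minimal cardinality $n$ among all such formulas. A standard minimality argument then shows that $\theta(a_1,x_2)$ isolates $\tp(a_2/Ma_1)$: if some $\chi(a_1,x_2)\in\tp(a_2/Ma_1)$ were not implied by $\theta(a_1,x_2)$, then $\theta\wedge\chi$ would witness $a_2\in\acl(Ma_1)$ with strictly fewer solutions, contradicting minimality. Consequently $\theta(a_1,\cU)$ is exactly the orbit of $a_2$ under $\mathrm{Aut}(\cU/Ma_1)$.

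The key observation is that every realization $b$ of $\theta(a_1,x_2)$ satisfies $b\equiv_M a_2$: such a $b$ is conjugate to $a_2$ by an automorphism fixing $Ma_1$, which in particular fixes $M$ pointwise. Hence for every $\cL$-formula $\psi(x_2,y)$ and every $c\in M$, the truth value of $\psi(b,c)$ is independent of the chosen realization $b$ and equals that of $\psi(a_2,c)$. Since $\theta(a_1,\cU)\neq\emptyset$, this yields
\[
\psi(a_2,c)\ \Longleftrightarrow\ \cU\models\forall x_2\,(\theta(a_1,x_2)\to\psi(x_2,c))\ \Longleftrightarrow\ \cU\models\exists x_2\,(\theta(a_1,x_2)\wedge\psi(x_2,c)).
\]
Setting $\Psi(x_1,y):=\forall x_2\,(\theta(x_1,x_2)\to\psi(x_2,y))$, an $\cL(M)$-formula, the left equivalence says precisely that $\psi(a_2,c)$ holds iff $\cU\models\Psi(a_1,c)$. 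Applying the definition scheme of the definable type $\tp(a_1/M)$ to $\Psi$ (absorbing the $M$-parameters of $\theta$ into an auxiliary parameter variable) produces an $\cL(M)$-formula $d(\psi)(y)$ with $\psi(a_2,c)\Leftrightarrow M\models d(\psi)(c)$ for all $c\in M$, which is exactly a $\psi$-definition for $\tp(a_2/M)$.

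The one genuinely non-formal point — and the crux of the lemma — is the passage from $a_2$, which is \emph{not} a definable function of $a_1$, to the quantified condition over the finite set $\theta(a_1,\cU)$: it is the uniformity of the $M$-type across the whole $\mathrm{Aut}(\cU/Ma_1)$-orbit that legitimizes the $\forall/\exists$ reformulation and thereby brings $\tp(a_2/M)$ within reach of the scheme for $\tp(a_1/M)$. Alternatively one could code the finite set $\theta(a_1,\cU)$ by an imaginary $e\in\dcl^{\eq}(Ma_1)$ and invoke the straightforward $\dcl$-version of the statement in $\cL^{\eq}$, but the direct argument above has the advantage of not passing to imaginaries.
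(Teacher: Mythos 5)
Your proof is correct. The paper states this lemma without proof (it is flagged as folklore), and your argument is exactly the standard one: take a formula $\theta(a_1,x_2)$ over $M$ isolating $\tp(a_2/Ma_1)$ via the minimal-cardinality trick, observe that all realizations of $\theta(a_1,x_2)$ are $M$-conjugate to $a_2$ so that $\psi(a_2,c)\Leftrightarrow\forall x_2(\theta(a_1,x_2)\to\psi(x_2,c))$ for $c\in M$, and then feed the latter formula (with the $M$-parameters of $\theta$ absorbed into the parameter variable) to the definition scheme of $\tp(a_1/M)$. All steps, including the handling of parameters, are sound; there is nothing to compare against since the paper omits the argument entirely.
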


Let $\varphi(x;y)$ be a partitioned formula. A formula $\psi(y,z_\varphi)$ is a \emph{uniform definition for $\varphi$ (in $T$)} if for every model $M$ of $T$ and every $p\in \D{x}(M)$ there is $c=c(p,\varphi)\in \sS_{z_\varphi}(M)$ such that $\psi(y,c)$ is a $\varphi$-definition for $p$. We say $T$ has \emph{uniform definability of types} if every partitioned $\cL$-formula $\varphi(x;y)$ has a uniform definition, which we write $d(\varphi)(y,z_\varphi)$. The following is a routine coding exercise. 

\begin{lemma}\label{lem:UD_finite} Let $\varphi(x;y)$ be a partitioned $\cL$-formula. Suppose there are finitely many $\cL$-formulas $\psi_1(y,z_1), \ldots, \psi_n(y,z_n)$ such that for every model $M$ of $T$ and every $p\in \D{x}(M)$, there are $i\in\{1,\ldots,n\}$ and $c\in \sS_{z_i}(M)$ such that $\psi_i(y,c)$ is a $\varphi$-definition for $p$. Then $\varphi(x;y)$ has a uniform definition. 
\qed
\end{lemma}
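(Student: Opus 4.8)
The plan is to fold the finitely many candidate definitions into one formula whose parameters encode \emph{which} candidate to use together with the data that candidate needs. Since for a fixed $p$ we are free to choose the witnessing parameter $c(p,\varphi)$ depending on $p$, the tuple $z_\varphi$ can simultaneously carry all the parameter tuples $z_1,\dots,z_n$ and a short piece of selector data recording the correct index $i$; the combined formula then reads off the active clause from the selector and ignores the rest.

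To set this up I would first fix a sort $\sS$ that $T$ proves to be infinite (one exists because the models under consideration are infinite, and by completeness of $T$ whether a given sort is infinite does not depend on the chosen model). Introducing a selector variable $w$ and auxiliary variables $w_1,\dots,w_n$, all of sort $\sS$, I would put $z_\varphi \coloneqq (z_1,\dots,z_n,w_1,\dots,w_n,w)$ and define
\[
d(\varphi)(y,z_\varphi) \coloneqq \bigvee_{i=1}^n \Big( w = w_i \;\wedge\; \bigwedge_{j\ne i} w \ne w_j \;\wedge\; \psi_i(y,z_i) \Big).
\]
The selector clauses $w=w_i \wedge \bigwedge_{j\ne i} w\ne w_j$ are pairwise inconsistent, so at most one disjunct can hold; consequently, once $w_1,\dots,w_n$ are interpreted by distinct elements and $w$ is set equal to one of them, the displayed formula reduces, as a condition on $y$, to the single corresponding $\psi_i(y,z_i)$.

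For the verification, given $M\models T$ and $p\in\D{x}(M)$, the hypothesis supplies an index $i$ and $c_i\in\sS_{z_i}(M)$ such that $\psi_i(y,c_i)$ is a $\varphi$-definition for $p$. Choosing distinct $a_1,\dots,a_n\in\sS(M)$ — available since $\sS(M)$ is infinite — and setting $z_i\coloneqq c_i$, the remaining $z_j$ arbitrary, $w_j\coloneqq a_j$ and $w\coloneqq a_i$, I obtain a parameter for which $d(\varphi)(y,z_\varphi)$ defines in $M$ exactly the set defined by $\psi_i(y,c_i)$, and hence a $\varphi$-definition for $p$. This shows that $d(\varphi)$ is a uniform definition for $\varphi$. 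The argument is entirely routine; the only point that warrants a remark is the need for $n$ distinct selector values, which is precisely why I single out an infinite sort at the outset — the concrete theories of interest all have one, so no further care is required.
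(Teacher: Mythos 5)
Your proof is correct and is exactly the routine coding argument the paper has in mind (the paper omits the proof entirely, closing the statement with a QED box as an exercise): pad the parameter tuple with all of $z_1,\dots,z_n$ plus selector data, and use pairwise-inconsistent selector clauses so that exactly one disjunct is active. The one hypothesis you add --- a sort with at least $n$ distinct elements to interpret the selectors --- is harmless for every theory considered in the paper, and you rightly flag it as the only point needing care.
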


\section{Completions by definable types}\label{sec:deftypes} 

In this section we define various ``completions'' of definable sets, using definable types. The main one consists precisely of the set of all definable types which concentrate on a given definable set. 

\subsection{The definable completion} \label{sec:definable_completion}
Let $X\subseteq \sS_x(\cU)$ be a $C$-definable set and $A$ be a small set. The \emph{definable completion of $X$ over $A$}, denoted $\D{X}(A)$, is the space of $A$-definable global types which concentrate on $X$. For a tuple of variables $x$ we write $\D{x}(A)$ for $\D{\sS_x(\cU)}(A)$. 

\begin{remark}\label{rem:not-clash}
Note that there is a small notation clash with respect to parenthesis: $\D{X}(A)$ is \emph{not} a subset of $S_X(A)$ (the former is contained in $S_X(\cU)$). However, when $A$ is a model and $X$ is $A$-definable, the restriction map $S_X(\cU)\to S_X(A)$ defines a one-to-one correspondence between $\D{X}(A)$ and the set of definable types in $S_X(A)$. The inverse of such a map is given by extension of definable types over models as above defined.   
\end{remark}


It is an easy exercise to check that the pushforward of a definable type is again definable. We will use the following functorial notation: 

\begin{lemma}\label{lem:can_extension}
	 Let $X$ and $Y$ be $C$-definable sets and $f\colon X\to Y$ be a $C$-definable map. Let $f^\Deff$ denote the restriction to $\D{X}(A)$ of the pushforward map $f_*$. Then $f^\Deff$ defines a map 
	 \[
	 f^{\Deff}\colon\D{X}(A)\to\D{Y}(A)
	 \]
for every small set $A$ containing $C$. \qed
\end{lemma}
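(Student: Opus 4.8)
The plan is to check, for an arbitrary $p\in\D{X}(A)$, that its image $f^{\Deff}(p)=f_*(p)$ satisfies the two conditions defining membership in $\D{Y}(A)$: namely that it concentrates on $Y$ and that it is $A$-definable. The first is immediate. Fixing a realization $a\models p$ in some elementary extension of $\cU$, the hypothesis that $p$ concentrates on $X$ gives $a\in X$, whence $f(a)\in Y$; thus $f_*(p)=\tp(f(a)/\cU)$ concentrates on the $C$-definable, a fortiori $A$-definable, set $Y$.

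The substance lies in exhibiting a scheme of definition for $f_*(p)$ over $A$. Given a partitioned $\cL$-formula $\psi(w,y)$, where $w$ ranges over the sort of $Y$, I would use the $C$-definability of $f$ to form the $\cL(C)$-formula $\chi(x,y)\coloneqq \exists w\,(w=f(x)\wedge \psi(w,y))$, i.e.\ $\psi(f(x),y)$; since $f$ is total on $X$ and $p$ concentrates on $X$, there is no ambiguity on the locus where $p$ lives. By the definition of the pushforward, for every parameter $c$ one has $\psi(w,c)\in f_*(p)$ if and only if $\chi(x,c)\in p$. It therefore suffices to find a $\chi$-definition for $p$ over $A$ and transport it along this equivalence, setting the $\psi$-definition for $f_*(p)$ equal to it.

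The one point requiring a word of justification is that $p$, being $A$-definable, admits definitions not only for $\cL$-formulas but for all $\cL(A)$-formulas, in particular for the $\cL(C)$-formula $\chi$, as $C\subseteq A$. This is the standard parameter-absorption: writing $\chi(x,y)=\varphi_0(x,y,e)$ for an $\cL$-formula $\varphi_0$ and a parameter tuple $e\in C$, the condition $\chi(x,c)\in p$ reads $\varphi_0(x,(c,e))\in p$, which by $A$-definability of $p$ is equivalent to $\cU\models d_p(\varphi_0)(c,e)$; since $e\in C\subseteq A$, the formula $d_p(\varphi_0)(y,e)$ is an $\cL(A)$-formula in $y$ and serves as the required $\psi$-definition of $f_*(p)$. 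Hence $f_*(p)$ is $A$-definable and $f^{\Deff}$ indeed maps $\D{X}(A)$ into $\D{Y}(A)$.

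I do not expect a genuine obstacle, since the content is purely formal; the only place to be careful is the interplay between $\cL$- and $\cL(C)$-formulas when substituting the definable function $f$, where one must keep track of the parameters from $C$ used to define $f$ and fold them into $A$ before invoking the definition scheme of $p$.
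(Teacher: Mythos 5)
Your proof is correct and is exactly the standard argument the paper leaves as an exercise (the lemma is stated with no written proof, following the remark that the pushforward of a definable type is again definable). In particular, you rightly identify and handle the only delicate point, namely absorbing the parameters from $C$ used to define $f$ into $A$ before invoking the definition scheme of $p$.
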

 

\begin{remark}\label{rem:surjectivity_transfer} \hspace{1cm}
\begin{enumerate}[leftmargin=*]
    \item Note that if $f$ is injective, so is $f_
    *$ and hence $f^\Deff$. However, the transfer of surjectivity from $f$ to $f^\Deff$ is more subtle. We will say that $T$ has \emph{surjectivity transfer} precisely if for every surjective definable function $f\colon X\to Y$, the function $f^\Deff$ is surjective. Note that if $T$ has definable Skolem functions, then it has surjectivity transfer. Indeed, let $g\colon Y\to X$ be a definable section of $f$. Then, given any type $p\in \D{Y}$, we have that $f^\Deff(g^\Deff(p))=p$, so $f^\Deff$ is surjective. We do not know whether $\CODF$ has surjectivity transfer. 

\item By \cite[Lemma 4.2.6]{HL}, a stronger results holds for o-minimal theories and $\ACVF$. Indeed, the function $f^\Deff$ is surjective even when $X$ and $Y$ are definable subsets of products of imaginary sorts. Adapting the argument given in \cite[Lemma 4.2.6(a)]{HL}, the same result holds for $\RCVF$. 

\end{enumerate}
\end{remark}



 


\subsection{Other completions by definable types}

\subsubsection{Bounded types}\label{sec:bd_ort}
Let $T$ be an o-minimal theory and $M$ be a model of $T$. Given an elementary extension $M\preceq N$, we say that \emph{$N$ is bounded by $M$} if for every $b\in N$, there are $c_1, c_2\in M$ such that $c_1\leqslant b\leqslant c_2$. Let $A$ be a small subset of $\cU$ and $X$ be a definable set. A type $p\in \D{X}(A)$ is \emph{bounded} if for any small model $M$ containing $A$ and every realization $a\models p|M$, there is an elementary extension $M\preceq N$ with $a\in X(N)$ and such that $N$ is bounded by $M$. 

Let $T$ be either $\RCVF$ or a completion of $\ACVF$. Let $A$ be a small subset of $\cU$. A type $p\in \D{X}(A)$ is \emph{bounded} if for any small model $M$ containing $A$ and every realization $a\models p|M$, $\Gamma(\acl(Ma))$ is bounded by $\Gamma(M)$, where $\Gamma$ denotes the value group sort.

\subsubsection{The bounded completion}\label{def:bounded} 
Let $T$ be either an o-minimal theory, a completion of $\ACVF$ or $\RCVF$. Let $A$ be a small subset of $\cU$ and $X$ be a definable set. The \emph{bounded completion of $X$ over $A$}, denoted $\widetilde{X}(A)$, is the set of bounded global $A$-definable types. 

\subsubsection{Types orthogonal to $\Gamma$}\label{sec:orth_gamma}
Let $T$ be a complete theory extending the theory of valued fields (\emph{e.g.}, $\RCVF$ or a completion of $\ACVF$). Let $A$ be a small subset of $\cU$. A type $p\in \D{x}(A)$ is said to be \emph{orthogonal to $\Gamma$} if for every model $M$ containing $A$ and every realization $a\models p|M$, $\Gamma(M)=\Gamma(\acl(Ma))$.

\subsubsection{The orthogonal completion}\label{sec:stable_completion} Let $T$ be either a completion of $\ACVF$ or $\RCVF$. Given a definable set $X$, the \emph{orthogonal completion of $X$}, denoted by $\widehat{X}(A)$, is the set of global $A$-definable types concentrating on $X$ which are orthogonal to $\Gamma$. 

By \cite[Proposition 2.9.1]{HL}, when $T$ is a completion of $\ACVF$, the set $\widehat{X}(A)$ also corresponds to the set of definable types which are generically stable and is also called the \emph{stable completion of $X$}. Note that this equivalence does not hold in $\RCVF$ as in this theory every generically stable type is a realized type.    

\begin{remark}\label{rem:bounded}\
\begin{enumerate}[leftmargin=*]
    \item If $T$ is an o-minimal expansion of the theory of real closed field, then every bounded definable type is a realized type. However, this is not the case for general o-minimal theories. For example, the type of an element arbitrarily close to zero in $\DOAG$ is bounded.
    \item Let $T$ be one of the theories listed in \ref{def:bounded} and $M$ be a model of $T$. If $\tp(a/M)$ is bounded, then $tp(b/M)$ is bounded for every $b \in \acl(M,a)$. 
    \item Let $T$ be either a completion of $\ACVF$ or $\RCVF$, and let $M$ a model of $T$. If $\tp(a/M)$ is orthogonal to $\Gamma$, then $tp(b/M)$ is orthogonal to $\Gamma$ for every $b \in \acl(M,a)$.
    \item Given a definable function $f\colon X\to Y$, then the pushforward $f*$ of $f$ restricted to $\widetilde{X}$ (resp. to $\widehat{X}$) has image in $\widetilde{Y}$ (resp. $\widehat{Y}$). Similarly as for the definable completion, one uses the more functorial notation $\widetilde{f}$ (resp. $\widehat{f}$) to  denote the restriction of $f_*$ to $\widetilde{X}$ (resp. $\widehat{X}$). 
\end{enumerate}
\end{remark}

\subsubsection{Geometric interpretation} For $T$ a completion of $\ACVF$, let $V$ be a variety over a complete rank 1 valued field $F$. In \cite{HL}, $\widehat{V}$ is introduced as a model-theoretic analogue of the Berkovich analytification $V^{\mathrm{an}}$ of $V$. Similarly, our aim is to view $\widetilde{V}$ as a model-theoretic analogue of the Huber analytification of $V$. When $T$ is $\RCVF$, $\widehat{V}$ is a good candidate to be the model-theoretic counterpart of the analytification of semi-algebraic sets defined by Jell, Scheiderer and Yu in \cite{jell_etal}. The set $\widehat{V}$ also corresponds to the set of residue field dominated types as defined by Ealy, Haskell and Ma\v{r}\'iíkov\'a in \cite{cli-has-mari}. The space $\widetilde{V}$ (in $\RCVF$) seems to suggest there is an analogue of Huber's analytification of semi-algebraic sets. Finally, $\D{V}$ can be viewed as a model-theoretic analogue of the ``space of valuations on $V$". As mentioned in the introduction, we will present more structural results concerning these spaces in a sequel of this article.

\section{Spaces of definable types as pro-definable sets}

\subsection{Pro-definability}\label{sec:prodef}

\subsubsection{Pro-definable sets}\label{sec:prodef_sets}
Let $(I,\leq)$ be a small upwards directed partially ordered set and $C$ be a small subset of $\cU$. A \emph{$C$-definable projective system} is a collection $(X_i, f_{ij})$ such that 
\begin{enumerate}
\item for every $i\in I$, $X_i$ is a $C$-definable set; 
\item for every $i, j\in I$ such that $i\geqslant j$; $f_{ij}\colon X_i\to X_j$ is $C$-definable;
\item $f_{ii}$ is the identity on $X_i$ and $f_{ik} = f_{jk}\circ f_{ij}$ for all $i \geqslant j \geqslant k$.
\end{enumerate}  

A \emph{pro-$C$-definable set} $X$ is the projective limit of a $C$-definable projective system $(X_i, f_{ij})$
\[
X\coloneqq \varprojlim_{i\in I} X_i.
\]
We say that $X$ is \emph{pro-definable} if it is pro-$C$-definable for some small set of parameters $C$. Pro-definable sets can also be seen as $\ast$-definable sets. By a result of Kamensky \cite{kamensky}, we may identify $X$ and $X(\cU)$.  

\subsubsection{Pro-definable morphisms}\label{sec:prodef_mor}

Let $X=\varprojlim_{i\in I} X_i$ and $Y=\varprojlim_{j\in J} Y_j$ be two pro-$C$-definable sets with associated $C$-definable projective systems $(X_i,f_{ii'})$ and $(Y_j, g_{jj'})$. A \emph{pro-$C$-definable morphism} is the data of a monotone function $d\colon J \to I$ and a family of $C$-definable functions $\{\varphi_{ij}\colon X_i\to Y_j \mid i\geqslant d(j)\}$ such that, for all $j\geqslant j'$ in $J$ and all $i \geqslant i'$ in $I$ with $i\geqslant d(j)$ and $i'\geqslant d(j')$, the following diagram commutes 
\[
\begin{tikzcd}
X_{i} \ar{r}{f_{ii'}} \ar{d}{\varphi_{ij}} &  X_{i'} \ar{d}{\varphi_{i'j'}} \\
Y_{j} \ar{r}{g_{jj'}} & Y_{j'}.
\end{tikzcd}
\]
By \cite{kamensky}, taking $\cU$-points, the above data defines a map $\varphi\colon X(\cU)\to Y(\cU)$ satisfying the following commutative diagram
\[
\begin{tikzcd}
X \ar{d}{\varphi} \ar{r}{\pi_i} & X_{i} \ar{r}{f_{ii'}} \ar{d}{\varphi_{ij}} &  X_{i'} \ar{d}{\varphi_{i'j'}} \\
Y \ar{r}{\pi_j}  & Y_{j} \ar{r}{g_{jj'}} & Y_{j'},
\end{tikzcd}
\]
where $\pi_i$ and $\pi_j$ denote the canonical projections $\pi_i\colon X\to X_i$ and $\pi_j\colon Y\to Y_j$. Two pro-definable maps $(\varphi_{ij}, d)$ and $(\varphi_{ij}', d')$ are identified if they induced the same map from $X(\cU)$ to $Y(\cU)$.

\subsection{Completions by definable types as pro-definable sets}\label{sec:type_pro} 

As stated in the introduction, we would like to equip spaces of definable types with a pro-definable structure. Let us explain the precise meaning of this in the case of the definable completion. The other completions are handled analogously. 

We say that \emph{definable types are pro-definable in $T$} if for every model $M$ of $T$, for every $M$-definable set $X$, for $M$-definable function $f\colon X\to Y$ and every elementary extension $N$, there are a pro-$M$-definable set $P_X(M)$, a bijection $h_X^N\colon S^\Def_X(N)\to P_X(N)$ and a pro-$M$-definable morphim $f'_N\colon P_X(N)\to P_Y(N)$ making the following diagram commute
\[
\begin{tikzcd}
&
S_Y^\Def(N)
  \arrow[rr, "h_Y^N"]
&&
P_Y(N)
\\
S_X^\Def(N) 
  \arrow[rr, crossing over, "h_X^N" near end]
  \arrow[ur,"f^\Def"]  
&&
P_X(N)
\arrow[ur,"f_N'"]  
\\
&
S_Y^\Def(M)
  \arrow[rr, "h_Y^M" near start]
  \arrow[uu, crossing over]
&&
P_Y(M)
  \arrow[uu,"i"]
\\
S_X^\Def(M)
  \arrow[rr, "h_X^M"] 
  \arrow[uu] 
  \arrow[ur,"f^\Def"]  
&&
P_X(M)
  \arrow[uu, crossing over, "i" near end]
    \arrow[ur,swap,"f_M'"]  
\end{tikzcd}
\]

\noindent Note that the map $f_{\bullet}'$ is completely determined by the commutativity of the diagram. The condition we impose states that such a map has to be a pro-$M$-morphism. Similarly, we say a subfunctor $\mathcal{C}(\bullet)$ of $\D{\bullet}$ is pro-definable if $\D{\bullet}$ is the above diagram is replaced by $\mathcal{C}(\bullet)$. It is in this sense that we say that the bounded and orthogonal completions are pro-definable (in both cases we have a subfunctor by Remark \ref{rem:bounded}). 

\

The following result, essentially due to E. Hrushovski and F. Loeser \cite[Lemma 2.5.1]{HL}, shows the link between uniform definability of types and pro-definability. We include a proof for the reader's convenience.  

\begin{proposition}\label{prop:pro_def_eq} Suppose $T$ has uniform definability of types. Then definable types are pro-definable in $\cL^{\eq}$. In particular, if $T$ has elimination of imaginaries, then definable types are pro-definable in $T$. \qed
\end{proposition}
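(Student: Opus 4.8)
The plan is to construct, for each model $M\models T$ and each $M$-definable set $X$, an explicit projective system whose limit is $\D{X}$, using uniform definability of types to cut down the naive product $\prod_\varphi \sS_{z_\varphi}$ to a genuinely definable system. First I would fix an enumeration of all partitioned $\cL^{\eq}$-formulas $\varphi(x;y)$ together with their uniform definitions $d(\varphi)(y,z_\varphi)$ provided by the hypothesis. A definable type $p\in\D{X}(N)$ is completely determined by its scheme of definition, i.e. by the tuple $(c_\varphi)_\varphi$ of parameters $c_\varphi\in\sS_{z_\varphi}(N)$ such that $d(\varphi)(y,c_\varphi)$ is the $\varphi$-definition of $p$. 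The candidate pro-definable set $P_X$ will therefore be carved out of the $\ast$-definable product $\prod_\varphi \sS_{z_\varphi}$, and the map $h_X^N\colon\D{X}(N)\to P_X(N)$ sends $p$ to its tuple of canonical parameters. The uniformity is exactly what guarantees the target is the \emph{same} product of sorts for every $N$, so that $h_X^N$ varies coherently with $N$ and commutes with the inclusion $i\colon P_X(M)\hookrightarrow P_X(N)$.

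Next I would identify the first-order constraints that a tuple $(c_\varphi)_\varphi$ must satisfy in order to be the scheme of a genuine complete, consistent, definable type concentrating on $X$. These are of three kinds: (i) coherence/consistency conditions ensuring the $d(\varphi)(y,c_\varphi)$ assemble into a finitely satisfiable, complete, deductively closed type --- expressible, for each finite fragment, by requiring compatibility of the $\varphi$-definitions under Boolean combinations and under the usual type axioms; (ii) the condition that $p$ concentrates on $X$, i.e. the formula defining $X$ lies in $p$; and (iii) the self-referential condition that the $d(\varphi)(y,c_\varphi)$ actually \emph{define} the type rather than merely describe a consistent set of formulas. The key observation, as in Hrushovski--Loeser, is that each such condition, while ranging over infinitely many formulas in total, can be expressed finitely in terms of finitely many of the coordinates $c_\varphi$ at a time. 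Indexing by finite sets $F$ of formulas, I let $X_F\subseteq\prod_{\varphi\in F}\sS_{z_\varphi}$ be the $M$-definable set cut out by those conditions involving only coordinates in $F$, with the transition maps $f_{FF'}$ for $F\supseteq F'$ being the coordinate projections; since these conditions are preserved under dropping coordinates, the $f_{FF'}$ are well defined and the $(X_F,f_{FF'})$ form an $M$-definable projective system whose limit is $P_X$.

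For the functoriality of $f^\Def$, given an $M$-definable $f\colon X\to Y$ I would check that the pushforward has a scheme of definition computable uniformly from that of $p$: for each formula $\psi(y';y)$ in the $Y$-variables, one has $\psi\in f^\Def(p)$ iff $\psi(f(x);y)\in p$, so the $\psi$-definition of $f^\Def(p)$ is the $(\psi\circ f)$-definition of $p$, which is $d(\psi\circ f)(y,c_{\psi\circ f})$. This exhibits each coordinate of $h_Y^N(f^\Def(p))$ as an $M$-definable function of finitely many coordinates of $h_X^N(p)$, which is precisely the data of a pro-$M$-definable morphism $f'_\bullet\colon P_X\to P_Y$; its uniqueness and the commutativity of the full cube in the definition of pro-definability then follow formally, since all the maps involved are given by the same formulas independently of $N$. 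The ``in particular'' clause is immediate: if $T$ eliminates imaginaries then $\cL^{\eq}$-definable coincides with $\cL$-definable up to interdefinability, so the construction descends to $T$.

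The main obstacle I anticipate is condition (iii) above --- verifying that each defining condition is genuinely \emph{finitary} in the coordinates, i.e. that the cutting-out of $X_F$ really only requires the coordinates indexed by a finite set $F$ and that these conditions are stable under the projections $f_{FF'}$. Making the consistency-and-completeness requirements first-order and coordinate-local, rather than a single infinitary condition, is the technical heart; this is exactly the content of the Hrushovski--Loeser argument \cite[Lemma 2.5.1]{HL} that we are following, and Lemma \ref{lem:UD_finite} together with uniform definability is what allows the reduction to finitely many formulas at each stage.
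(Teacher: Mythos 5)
Your proposal follows essentially the same route as the paper's proof: embed $\D{x}(M)$ into the small product $\prod_\varphi \sS_{z_\varphi}(M)$ via canonical parameters of the uniform definitions (using $\cL^{\eq}$ to make them canonical), show the image is $\ast$-definable by conditions each involving finitely many coordinates, and obtain functoriality from the fact that the $\psi$-definition of $f^{\Def}(p)$ is the $(\psi\circ f)$-definition of $p$. The only piece you leave implicit is the paper's explicit choice of cutting conditions, namely $\theta_\varphi(z_\varphi):=(\forall y)(\exists x)(\varphi(x,y)\leftrightarrow d(\varphi)(y,z_\varphi))$ together with the normalization that $d$ factors through Boolean combinations, which jointly give finite satisfiability and hence consistency by compactness.
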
 

\begin{proof} Fix some model $M$ of $T$. Given a partitioned $\cL$-formula $\varphi(x;y)$, let $d(\varphi)(y,z_\varphi)$ be a uniform definition for $\varphi$. By possibly using an $\cL^{eq}$-formula, we may suppose that $z_\varphi$ is a single variable and 
\[
T^\eq\models (\forall z_\varphi)(\forall z_\varphi')(\forall y) [(d(\varphi)(y,z_\varphi)\leftrightarrow d(\varphi)(y,z_\varphi')) \to z_\varphi=z_\varphi'].
\]
Let $\Phi$ denote the set of partitioned $\cL$-formulas of the form $\varphi(x;y)$ where $y$ ranges over all finite tuples of variables. We associate to this data a map $\tau$ defined by 
\begin{equation}\label{eq:pi_d}\stepcounter{eqn}\tag{E\arabic{eqn}}
\tau\colon\D{x}(M)\to \prod_{\varphi\in \Phi} \sS_{z_\varphi}(M) \hspace{1cm} p\mapsto (c(p,\varphi))_{\varphi\in \Phi}, 
\end{equation}
where $c(p,\varphi)$ is such that $d(\varphi)(y,c(p,\varphi))$ is a $\varphi$-definition for $p$. The codomain of $\tau$ is a pro-definable set (being a small product of definable sets). Thus, since $\tau$ is injective, to obtain pro-definability it suffices to show that $\tau(\D{x}(M))$ is $*$-definable. 

Without loss of generality we may suppose that the map $d$ factors through Boolean combinations, that is, $d(\varphi\wedge \psi)=d(\varphi)\wedge d(\psi)$ and $d(\neg\varphi)=\neg d(\varphi)$.  
Consider the following set of formulas $\Theta$ containing, for each $\cL$-formula $\varphi(x;y)\in \Phi$, the formula $\theta_{\varphi}(z_{\varphi})$ given by  
\[
\theta_{\varphi}(z_{\varphi})\coloneqq   (\forall y) (\exists x) (\varphi(x,y)\leftrightarrow d(\varphi)(y,z_\varphi)). 
\]
Given~$\varphi_1(x;y_1), \ldots, \varphi_m(x;y_m)$ in $\Phi$, let $y$ be the tuple $(y_1,\ldots,y_m)$ and $\varphi(x;y)$ denote the conjunction $\bigwedge_{i=1}^m \varphi_i(x,y_i)$. Since $d$ factors through conjunctions 
\begin{equation}\label{eq:conjunction} \stepcounter{eqn}\tag{E\arabic{eqn}}
\models (\forall z_\varphi) (\forall z_{\varphi_1}) \cdots (\forall z_{\varphi_m}) (\forall y) (d(\varphi)(y,z_\varphi) \leftrightarrow \bigwedge_{i=1}^m d(\varphi_i)(y,z_{\varphi_i})). 
\end{equation}
We claim that 
\[
\tau(\D{X}(M))=\{(c_{\varphi})_{\varphi} \in \prod_{\varphi\in\Phi} \sS_\varphi \mid (c_\varphi)_\varphi \models \Theta\}. 
\] 
From left-to-right, let $p\in \D{x}$ and $\theta_\varphi(z_\varphi)$ be a formula in $\Theta$. We have that   
\begin{align*}
\tau(p)\models \theta_\varphi(z_\varphi)  	& \ \Leftrightarrow (\cc(p,\varphi))_{\varphi}\models \theta_\varphi(z_\varphi)  \\
															& \ \Leftrightarrow  \cU \models (\forall y) (\exists x) (\varphi(x,y)\leftrightarrow d(\varphi)(y,\cc(p,\varphi))),  
\end{align*}
and the last formula holds since for every $y$ any realization of $p$ satisfies such a formula. 

To show the right-to-left inclusion, let $(c_\varphi)_\varphi$ be such that $(c_\varphi)_\varphi\models\Theta$. Consider the set of formulas
\[
p(x)\coloneqq\{\varphi(x,b) \mid \cU\models d(\varphi)(b,c_\varphi)\}. 
\]
Let us show that $p(x)$ is an element of $S_x(\cU)$. Once we show $p(x)$ is consistent, that $p\in\D{x}(M)$ follows by definition. Let $\varphi_1(x,b_1),\ldots, \varphi_m(x,b_m)$ be formulas in $p(x)$. Letting $b\coloneqq (b_1,\ldots,b_m)$, by the definition of $p(x)$ and \eqref{eq:conjunction} we have that the formula $\varphi(x,b)$ is also in $p(x)$. Moreover, since $(c_\varphi)_\varphi\models \theta_{\varphi}$ we have in particular that 
\[
\models (\exists x)(\varphi(x,b)\leftrightarrow d(\varphi)(b, c_\varphi)).  
\]
Finally, since $\varphi(x,b)\in p(x)$, we must have that $\models d(\varphi)(b,c_\varphi)$, which shows there is an element satisfying $\varphi(x,b)$. By compactness, $p(x)$ is consistent. 

Given an $M$-definable subset $X\subseteq \sS_x(M)$, we endow $\D{X}(M)$ with the pro-definable structure inherited from $\D{x}(M)$. More precisely, if $X$ is defined by a formula $\psi(x,a)$ for some tuple $a \in M$, we have that  
\[
\tau(\D{X}(M)) = \{(c_{\varphi})_{\varphi} \in \prod_{\varphi\in\Phi} \sS_\varphi \mid (c_\varphi)_\varphi \models \Theta \cup \{d(\psi)(a,z_\psi)\}\}. 
\]

We leave as an exercise to show that the present construction guarantees all the above functoriality properties.
\end{proof}

\begin{corollary}\label{cor:pro_def_eq} Suppose $T$ has uniform definability of types. Let $M$ be a model and $X$ be a definable subset of $M$. Then every $\ast$-definable subset of $\D{X}(M)$ is pro-definable. \qed
\end{corollary}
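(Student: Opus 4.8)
The plan is to reduce everything to two facts already in hand: the injection $\tau$ built in the proof of Proposition~\ref{prop:pro_def_eq}, which realizes $\D{X}(M)$ as a $*$-definable subset of the pro-definable product $\Pi := \prod_{\varphi \in \Phi} \sS_{z_\varphi}$, and the identification of $*$-definable sets with pro-definable sets recalled in Section~\ref{sec:prodef_sets}. The whole argument is that a $*$-definable subset of a $*$-definable set is again $*$-definable, hence pro-definable.

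First I would recall the setup. By Proposition~\ref{prop:pro_def_eq}, $\tau$ is injective and $\tau(\D{X}(M))$ is exactly the subset of $\Pi$ cut out by the small collection of $\cL(M)$-formulas $\Theta_X := \Theta \cup \{d(\psi)(a,z_\psi)\}$, where $\psi(x,a)$ defines $X$. Identifying $\D{X}(M)$ with its image under $\tau$, a $*$-definable subset $S$ of $\D{X}(M)$ is one whose image $\tau(S)$ is cut out from $\Pi$ by a further small collection $\Theta_S$ of formulas over some small parameter set, each involving only finitely many of the coordinate variables $z_\varphi$. The key observation is then immediate:
\[
\tau(S) = \tau(\D{X}(M)) \cap \{c \in \Pi \mid c \models \Theta_S\}
\]
is cut out from $\Pi$ by the single collection $\Theta_X \cup \Theta_S$, which is again small with each formula in finitely many variables; so $\tau(S)$ is a $*$-definable subset of $\Pi$.

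It remains to promote $*$-definability to pro-definability. Write $\Theta_X \cup \Theta_S = \{\theta_i\}_{i \in I}$ and, for each $i$, let $F_i \subseteq \Phi$ be the finite set of indices $\varphi$ such that $z_\varphi$ occurs in $\theta_i$. I would index a projective system by the pairs $(F,E)$ with $F$ a finite subset of $\Phi$ and $E$ a finite subset of $I$ satisfying $\bigcup_{i \in E} F_i \subseteq F$, ordered by coordinatewise inclusion, setting
\[
X_{(F,E)} := \Big\{ c \in \textstyle\prod_{\varphi \in F} \sS_{z_\varphi} \;\Big|\; c \models \textstyle\bigwedge_{i \in E} \theta_i \Big\}
\]
with transition maps the coordinate projections. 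Each $X_{(F,E)}$ is definable, the index set is directed, and taking $\cU$-points one checks that $\varprojlim X_{(F,E)} = \tau(S)$: a compatible family assigns a value to every coordinate of $\Pi$ and satisfies every $\theta_i$, and conversely. Thus $\tau(S)$ is pro-definable.

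The only point demanding care --- and the reason the index is a pair $(F,E)$ rather than $E$ alone --- is to keep track of all coordinates of $\Pi$, including those on which no $\theta_i$ imposes a constraint, so that the limit recovers the full set $\tau(S)$ and not merely its projection to the coordinates that appear in some defining formula. Beyond this bookkeeping there is no real obstacle: the statement is the standard equivalence between $*$-definable and pro-definable sets applied to an intersection of two $*$-definable sets.
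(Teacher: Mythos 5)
Your proposal is correct and is essentially the argument the paper intends: Corollary~\ref{cor:pro_def_eq} is stated without proof precisely because, once $\tau$ identifies $\D{X}(M)$ with a $\ast$-definable subset of the small product $\prod_{\varphi\in\Phi}\sS_{z_\varphi}$, a further $\ast$-definable subset is cut out by adjoining another small set of formulas, and $\ast$-definable sets are pro-definable by the identification recalled in Section~\ref{sec:prodef_sets}. Your explicit projective system indexed by pairs $(F,E)$ just spells out that last identification; it adds detail but no new idea.
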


\begin{remark}
If $T$ has uniform definability of types and has surjectivity transfer for definable functions in $\cL^{\eq}$ (see (2) of Remark \ref{rem:surjectivity_transfer}), then definable types concentrating on a product of imaginary sorts are also uniformly definable. In particular, by Proposition \ref{prop:pro_def_eq}, if $X$ is a definable subset of some product of imaginary sorts, then $\D{X}(M)$ is pro-definable (in $\cL^{\eq}$). 

\end{remark}

\section{Stably embedded pairs and elementarity}\label{sec:SEP}

Suppose $\cL$ is a one-sorted language. Let $\cL_P$ be a language extending $\cL$ by a new unary predicate $P$. We denote an $\cL_P$-structure as a pair $(N, A)$ where $N$ is an $\cL$-structure and $A\subseteq N$ corresponds to the interpretation of $P$. Given a complete $\cL$-theory $T$, the $\cL_P$-theory of \emph{elementary pairs of models of $T$}, denoted $T_P$. Given a tuple $x=(x_1,\ldots,x_m)$, we abuse of notation and write $P(x)$ as an abbreviation for $\bigwedge_{i=1}^n P(x_i)$. When $\cL$ is multi-sorted we let $\cL_P$ denote the language which extends $\cL$ by a new unary predicate $P_\sS$ for every $\cL$-sort $\sS$. Analogously, an $\cL_P$-structure $N$ is a model of $T_P$ if the collection of subsets $P_\sS(N)$ forms an elementary $\cL$-substructure of $N$. We will also denote any such a structure as a pair $(N, M)$ where $M\prec N\models T$ and for every $\cL$-sort $\sS$, $P_\sS(N)=\sS(M)$.  


\subsection{Stable embedded pairs}\label{sec:SEpairs1} Let $M\prec N$ be an elementary extension of models of $T$. The extension is called \emph{stably embedded} if for every $\cL(N)$-definable subset $X\subseteq \sS_x(N)$, the set $X\cap \sS_x(M)$ is $\cL(M)$-definable.

The class of \emph{stably embedded models of $T$}, denoted $\SE(T)$, is the class of $\cL_P$-structures $(N, M)$ such that $M\prec N\models T$ and the extension $M\prec N$ is stably embedded. The following lemma is a standard exercise. 

\begin{lemma}\label{lem:stably_def} An elementary extension $M\prec N$ is stably embedded if and only if for every tuple $a$ in $N$, the type $tp(a/M)$ is definable. \qed
\end{lemma}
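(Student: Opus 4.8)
The plan is to prove Lemma~\ref{lem:stably_def} by establishing both implications, where the content lies in relating the abstract definability of all types over $M$ to the concrete trace condition defining stable embeddedness.

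First I would prove the easy direction, namely that if every type $\tp(a/M)$ is definable then $M \prec N$ is stably embedded. Let $X \subseteq \sS_x(N)$ be $\cL(N)$-definable, say by a formula $\varphi(x, b)$ with $b$ a tuple from $N$. The goal is to show $X \cap \sS_x(M)$ is $\cL(M)$-definable. The key observation is to switch the roles of the object and parameter variables: consider the partitioned formula $\varphi(x, y)$ and look at the definability of $\tp(b/M)$. By hypothesis $\tp(b/M)$ is definable, so there is an $\cL(M)$-formula $d_{\tp(b/M)}(\psi)(x)$ such that for all $c \in \sS_x(M)$ we have $\psi(b, c) \in \tp(b/M) \Leftrightarrow M \models d_{\tp(b/M)}(\psi)(c)$, where $\psi(y,x) \coloneqq \varphi(x,y)$. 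Unwinding, for $c \in \sS_x(M)$ we get $c \in X \Leftrightarrow N \models \varphi(c, b) \Leftrightarrow M \models d_{\tp(b/M)}(\varphi^{\mathrm{sw}})(c)$, exhibiting $X \cap \sS_x(M)$ as defined by an $\cL(M)$-formula. Thus $M \prec N$ is stably embedded.

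Next I would prove the converse, that stable embeddedness implies every $\tp(a/M)$ is definable. Fix a tuple $a$ in $N$ and a partitioned $\cL$-formula $\varphi(x, y)$; I need an $\cL(M)$-formula $d(y)$ with $\varphi(a, c) \in \tp(a/M) \Leftrightarrow M \models d(c)$ for all $c \in \sS_y(M)$. Consider the $\cL(N)$-definable set $Y \coloneqq \{ y \in \sS_y(N) \mid N \models \varphi(a, y) \}$, which uses the parameter $a$. By stable embeddedness, $Y \cap \sS_y(M)$ is $\cL(M)$-definable, say by $d(y)$. Then for $c \in \sS_y(M)$ we have $\varphi(a,c) \in \tp(a/M) \Leftrightarrow N \models \varphi(a,c) \Leftrightarrow c \in Y \cap \sS_y(M) \Leftrightarrow M \models d(c)$, which is exactly a $\varphi$-definition for $\tp(a/M)$ over $M$. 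Since $\varphi$ was arbitrary, $\tp(a/M)$ is definable.

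The main point to handle carefully, rather than a genuine obstacle, is the role-swapping of object and parameter variables in the first implication: one must phrase stable embeddedness and definability with respect to the correct partition of variables, and keep track of the fact that the defining parameter $b$ lives in $N$ while the traced coordinates range over $M$. In the multi-sorted setting I would also note that the argument applies sort by sort, using the predicates $P_{\sS}$, but this introduces no new difficulty. Both directions are short once the correct reindexing of variables is set up, which is why the statement is a ``standard exercise.''
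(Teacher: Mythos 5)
Your proof is correct and is precisely the standard variable-swapping argument that the paper leaves as an exercise (the lemma is stated with no written proof). Both directions are handled properly, including the key point that in the left-to-right direction one applies definability to the type of the \emph{parameter} tuple $b$ over $M$ with the roles of object and parameter variables exchanged.
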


The main objective of this section is to show that for various NIP theories, the class of stably embedded pairs is an $\cL_P$-elementary class. Stable theories constitute a trivial example of this phenomenon, since the class of stably embedded pairs coincides with the class of elementary pairs. O-minimal theories constitute a less trivial example. Let us first recall Marker-Steinhorn characterizarion of definable types in o-minimal structures. 

\begin{theorem}[{\cite[Theorem 2.1]{MS}}]\label{thm:MS} Let $T$ be an o-minimal theory and $M$ be a model of $T$. Then, $p\in S_x(M)$ is definable if and only if for every realization $a$ of $p$, $M$ is Dedekind complete in $M(a)$ (where $M(a)$ is the prime model of $M$ over $a$). 
\end{theorem}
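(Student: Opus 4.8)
The plan is to establish the two implications separately; the forward direction is routine, and essentially all the difficulty sits in the reverse one. For the forward direction, suppose $p$ is definable. Since o-minimal theories have definable Skolem functions, every $b\in M(a)=\dcl(Ma)$ has the form $b=f(a)$ for some $\cL(M)$-definable function $f$. Taking the $\cL(M)$-formula $\theta(x,y):=(f(x)>y)$, the cut $\{m\in M : m<b\}$ equals the set defined in $M$ by $d_p(\theta)(y)$, which is $\cL(M)$-definable precisely because $p$ is. A downward-closed $\cL(M)$-definable subset of the line in an o-minimal structure is one of $\emptyset$, $M$, $(-\infty,c)$, $(-\infty,c]$ with $c\in M$; in each case the cut determined by $b$ is principal. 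Hence no element of $M(a)$ fills a proper Dedekind cut of $M$, i.e. $M$ is Dedekind complete in $M(a)$.

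For the reverse direction, suppose $M$ is Dedekind complete in $M(a)$ and fix a partitioned formula $\varphi(x;y)$; the goal is an $\cL(M)$-formula defining $\{c\in \sS_y(M) : \models\varphi(a,c)\}$. I would first treat the case of a single parameter variable $y$. Apply uniform cell decomposition to the definable family $\{y : \varphi(x,y)\}$: there are $\cL(M)$-definable functions $g_1(x)<\dots<g_k(x)$ and a finite $\cL(M)$-definable partition of $\sS_x(M)$ such that, on each piece, $\{y : \varphi(x,y)\}$ is a fixed Boolean combination of the intervals cut out by the $g_i$. The type $p$ selects the piece containing $a$, so it remains to identify the trace on $M$ of each interval with endpoints among the $g_i(a)\in M(a)$. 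Here the hypothesis enters decisively: by Dedekind completeness the cut of each $g_i(a)$ over $M$ is principal, so there is $\gamma_i\in M$ (together with a side, decided by finitely many formulas in $p$) with $\{c\in M : c<g_i(a)\}$ equal to $(-\infty,\gamma_i)$ or $(-\infty,\gamma_i]$ (or $\emptyset$, or $M$). Assembling these principal cuts according to the Boolean pattern recorded by $p$ yields the required $\cL(M)$-formula $d_p(\varphi)(y)$.

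The genuine obstacle is passing from a single parameter variable to an arbitrary tuple $y=(y_1,\dots,y_m)$, that is, obtaining cut-definability \emph{uniformly} in parameters rather than pointwise; running the single-variable argument for each fixed value of $(y_1,\dots,y_{m-1})$ produces an $\cL(M)$-definition for each instance, but these need not glue to an $\cL(M)$-definable subset of $\sS_y(M)$. I expect to handle this by induction on the length of the type variable $x$: writing $a=(a',a_n)$, one shows $\tp(a'/M)$ is definable by induction, analyzes $\tp(a_n/M(a'))$ over the larger base, and transfers definability back down to $M$ using that $\tp(a'/M)$ is itself definable, all while invoking Dedekind completeness of $M$ in $M(a)$ to ensure that the cuts introduced at each stage stay principal and vary definably in the parameters. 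Controlling this uniformity — equivalently, showing that the standard-part map $z\mapsto\gamma(z)$ attached to an $\cL(M)$-definable function $g(a,z)$ is itself $\cL(M)$-definable — is the technical core of the argument and the step I would expect to be the most delicate.
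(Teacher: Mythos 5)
First, a remark on the ground truth: the paper does not prove this statement at all --- it is quoted verbatim from Marker--Steinhorn \cite[Theorem 2.1]{MS} --- so your proposal has to be judged as a free-standing proof of that theorem, and as such it has a genuine gap. Your forward direction is essentially right in spirit (the cut of $b$ over $M$ is an $M$-definable downward-closed set, hence principal by o-minimality), but the way you get there is not available in an arbitrary o-minimal theory: definable Skolem functions need not exist, and $M(a)$ is the prime model over $M\cup\{a\}$, which in general properly contains $\dcl(Ma)$. The fix is standard: every $b\in M(a)$ realizes an isolated type over $Ma$, and isolation plus definability of $\tp(a/M)$ gives definability of $\tp(ab/M)$, hence of $\tp(b/M)$, hence principality of its cut.

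The serious problem is the converse, which is where all of the content of the theorem lives. Your argument is complete only for a single parameter variable $y$; for tuples $y=(y_1,\dots,y_m)$ you candidly state that the key step --- that the standard-part map $z\mapsto\mathrm{st}(g(a,z))$ attached to an $M(a)$-definable function is itself $\cL(M)$-definable, together with the ``side'' data, uniformly in $z$ --- is ``the step I would expect to be the most delicate,'' and you do not prove it. That step \emph{is} the Marker--Steinhorn theorem; everything else in your write-up is routine o-minimality. Moreover, the reduction you propose, induction on the length of the realization tuple $x$, does not attack this difficulty: already for a single element $a$ one must show that $\varphi(a,N^m)\cap M^m$ is $M$-definable for every $m$, so the base case of your induction contains the full multi-parameter problem. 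The induction that actually works (as in \cite{MS}) is on the number of parameter variables $m$, via cell decomposition of $\varphi(a,N^m)$ in $N=M(a)$ and a separate, nontrivial lemma establishing definability of the standard parts of the cell walls uniformly in the base of the cell. As it stands, the proposal correctly locates the crux but leaves it unproved, so it is not a proof.
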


Since being Dedekind complete is expressible in $\cL_P$, we readily obtain:

\begin{corollary}\label{cor:omin_elementary} Let $T$ be a complete o-minimal theory. Then the class of stably embedded pairs is $\cL_P$-elementary. 
\end{corollary}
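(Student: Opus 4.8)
The plan is to deduce Corollary~\ref{cor:omin_elementary} directly from the Marker--Steinhorn characterization (Theorem~\ref{thm:MS}) together with the characterization of stably embedded pairs via definability of types (Lemma~\ref{lem:stably_def}). By Lemma~\ref{lem:stably_def}, a pair $(N,M)$ with $M\prec N\models T$ lies in $\SE(T)$ precisely when $\tp(a/M)$ is definable for every tuple $a$ in $N$. By Theorem~\ref{thm:MS}, for a single tuple $a$ this definability is equivalent to $M$ being Dedekind complete in $M(a)$, the prime model over $Ma$. The task is therefore to show that the conjunction of these conditions, quantified over all $a\in N$, is captured by a set of $\cL_P$-sentences.

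First I would reduce ``Dedekind complete in $M(a)$'' to a condition on $N$ itself. Since $T$ is o-minimal, $M(a)\preceq N$ and every element of $M(a)$ is of the form $t(a)$ for an $\cL(M)$-definable (Skolem) term; more robustly, $M$ being Dedekind complete in $M(a)$ is equivalent to: for every $b\in\dcl(Ma)$, there is no element of $M(a)$ strictly between the downward and upward $M$-cuts determined by $b$ that is omitted by $M$ --- equivalently, $M$ is Dedekind complete in the convex hull generated by $Ma$ inside $N$. The key point is that definability of $\tp(a/M)$ depends only on the $\cL(M)$-structure on $Ma$, and by o-minimality the relevant data is the cut each coordinate of $a$ (and each $M$-definable function value at $a$) realizes over $M$. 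So I would phrase the condition intrinsically in $(N,M)$: the pair fails stable embeddedness iff there is a tuple $a$ in $N$ and an $M$-definable unary function $f$ such that $f(a)$ realizes a cut over $M$ that is not filled in $M$, i.e.\ $M$ is not Dedekind complete below $f(a)$.

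Next I would write down the $\cL_P$-axioms. The class $\SE(T)$ is axiomatized by $T_P$ (which forces $M=P(N)\prec N\models T$) together with, for each $\cL$-formula defining a unary function $f(x,y)$ (or more simply for each coordinate projection, after noting that Dedekind completeness in $M(a)$ reduces to Dedekind completeness of each one-variable cut, using that $M(a)$ is the definable closure and o-minimal cell decomposition reduces multivariable cuts to iterated one-variable ones), the sentence asserting Dedekind completeness: for all $\bar y$ and all $x$ with $P(\bar y)$, letting $u=f(x,\bar y)$, if the cut $\{v : P(v)\wedge v<u\}$ has an upper bound in $P$ and the cut $\{v: P(v)\wedge v>u\}$ has a lower bound in $P$, then there is $w$ with $P(w)$ realizing that same cut --- more precisely, there is no ``gap'' in $P$ straddled by $u$. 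Concretely, the relevant schema is
\[
(\forall x)(\forall \bar y)\,\Bigl[P(\bar y)\wedge (\exists v\,P(v)\wedge v\le f(x,\bar y))\wedge(\exists v\,P(v)\wedge v\ge f(x,\bar y))\Bigr]\to \bigl(\exists w\,P(w)\wedge \text{``$w$ and $f(x,\bar y)$ realize the same $P$-cut''}\bigr),
\]
where the final bracketed clause is the $\cL_P$-formula $(\forall v)\,[P(v)\to (v<f(x,\bar y)\leftrightarrow v<w)]$. Each such sentence is first-order in $\cL_P$, and their conjunction (over all definable $f$) axiomatizes exactly the pairs in which every $\tp(a/M)$ is definable.

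The main obstacle, and the step deserving the most care, is the reduction from Dedekind completeness in the multivariable prime model $M(a)$ to the manageable family of one-variable $\cL_P$-sentences above. One must justify that $M$ being Dedekind complete in $M(a)$ for a tuple $a$ is equivalent to $M$ being Dedekind complete below $f(a)$ for every $\cL(M)$-definable unary function $f$; this uses that in an o-minimal structure $M(a)=\dcl(Ma)$, every element is $f(a)$ for some $\emptyset$-definable $f$ with parameters absorbed, and that a failure of Dedekind completeness in $M(a)$ is witnessed by some single element realizing an unfilled cut. I would also remark that this is where the hypothesis that $P$ denotes an \emph{elementary} substructure is used, so that $M(a)$ is genuinely the o-minimal prime model and Theorem~\ref{thm:MS} applies verbatim. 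Once this reduction is in place, that each condition is $\cL_P$-expressible and that the axiom set cuts out exactly $\SE(T)$ is immediate from Lemma~\ref{lem:stably_def} and Theorem~\ref{thm:MS}, giving the desired $\cL_P$-elementarity.
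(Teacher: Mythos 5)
Your proposal follows the same route as the paper, which proves this corollary in one line: combine Lemma~\ref{lem:stably_def} with Theorem~\ref{thm:MS} and observe that Dedekind completeness is expressible in $\cL_P$ by a schema of sentences, one for each $\cL$-definable family of unary functions, exactly as you write them down. Your axiomatization and the overall logic are correct. The one spot that needs repair is the justification of the reduction from ``$M$ is Dedekind complete in $M(a)$'' to conditions on elements $f(a,\bar m)$ of $\dcl(Ma)$: you assert that $M(a)=\dcl(Ma)$ in an o-minimal structure, but this is false for a general complete o-minimal theory (it holds for o-minimal expansions of ordered groups, which have definable Skolem functions, but fails already for dense linear orders, where $\dcl(Ma)=M\cup\{a\}$ need not be a model and the prime model is strictly larger). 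The reduction is still valid, but for a different reason: any $c\in M(a)\setminus\dcl(Ma)$ has isolated type over $Ma$, hence lies in an interval with endpoints in $\dcl(Ma)\cup\{\pm\infty\}$ containing no point of $\dcl(Ma)$, and in particular no point of $M$; so the cut $c$ determines over $M$ coincides with the cut determined by one of these endpoints, and a failure of Dedekind completeness in $M(a)$ is always already witnessed by an element of $\dcl(Ma)$. With that substitution your argument is complete and agrees with the paper's intended proof.
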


The following result of Q. Brouette shows the analogue result for $\CODF$.  

\begin{theorem}[{\cite[Proposition 3.6]{brouette}}] Let $M$ be a model of $\CODF$. Then, $p\in S_x(M)$ is definable if and only if for every realization $a$ of $p$, $M$ is Dedekind complete in the real closure of the ordered differential field generated by $M\cup \{a\}$.  
\end{theorem}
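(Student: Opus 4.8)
The plan is to reduce this characterization to the Marker--Steinhorn criterion (Theorem~\ref{thm:MS}) for the o-minimal reduct $\RCF$, using Singer's quantifier elimination for $\CODF$. Fix $M\models\CODF$ and a tuple $a=(a_1,\dots,a_n)$, and pass to the (infinite) tuple $\nabla a:=(\delta^i a_j)_{i\ge 0,\,1\le j\le n}$ of all derivatives, writing $\nabla_N a$ for the finite truncation at order $N$. Since the ordered differential field generated by $M\cup\{a\}$ is, as a field, $M(\nabla a)$, its real closure $F$ equals $\dcl_{\RCF}(M\cup\nabla a)$, i.e. the prime $\RCF$-model over $M\cup\nabla a$. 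Thus, by Theorem~\ref{thm:MS} applied to the real closed reduct, the condition that $M$ be Dedekind complete in $F$ is precisely the assertion that $\tp_{\RCF}(\nabla a/M)$ is definable; and since $M$-isomorphic realizations generate $M$-isomorphic copies of $F$, this does not depend on the chosen realization. It therefore suffices to prove
\[
\tp_{\CODF}(a/M)\ \text{is definable} \iff \tp_{\RCF}(\nabla a/M)\ \text{is definable},
\]
reading the infinite tuple coordinatewise: a type in infinitely many variables is definable iff each finite subtype is, and $M$ is Dedekind complete in $F=\bigcup_N \dcl_{\RCF}(M\cup\nabla_N a)$ iff it is at every finite stage, as any filled gap already appears at a finite level.

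For the implication from right to left I would argue directly. Given an $\cL$-formula $\varphi(x;y)$, Singer's quantifier elimination rewrites it, modulo $\CODF$, as $\psi(\nabla_N x;\nabla_N y)$ for a quantifier-free $\RCF$-formula $\psi$ and some $N$. For a parameter $c\in M$ the derivatives $\nabla_N c$ again lie in $M$, so $\varphi(a;c)$ holds iff $\psi(\nabla_N a;\nabla_N c)$ does. Feeding the $M$-point $\nabla_N c$ into the $\RCF$-definition $d_q(\psi)$ supplied by definability of $\tp_{\RCF}(\nabla a/M)$, and precomposing with the $\CODF$-definable jet map $y\mapsto\nabla_N y$, yields a $\CODF$-definition $d_p(\varphi)(y):=d_q(\psi)(\nabla_N y)$ over $M$; hence $\tp_{\CODF}(a/M)$ is definable. (In passing, note that every finite subtuple of $\nabla a$ lies in $\dcl(Ma)$, so by Lemma~\ref{lem:general_def_types} its $\CODF$-type is definable whenever $\tp_{\CODF}(a/M)$ is, which is what justifies working coordinatewise in both directions.)

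The substantial direction is the converse, and I expect it to be the main obstacle. The naive idea of simply restricting the $\CODF$-definitions to the reduct fails, because a $\CODF$-definition may genuinely involve $\delta$ and $\CODF$-definable subsets of $M$ are far from being finite unions of intervals (the field of constants $\{c:\delta c=0\}$ is already dense and co-dense). Instead I would prove the contrapositive: assume $M$ is not Dedekind complete in $F$ and fix $b\in F$ filling a gap of $M$, say $b=g(\nabla_N a,\bar m)$ with $g$ an $\RCF$-$M$-definable function and $\bar m\in M$. If $\tp_{\CODF}(a/M)$ were definable then, applying the definition to the $\CODF$-formula $y<g(\nabla_N x,\bar m)$, the cut $S=\{c\in M:c<b\}$ would be a \emph{downward-closed} $\CODF$-$M$-definable subset of $M$ realizing a proper gap. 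The heart of the proof is then the tameness statement that \emph{every downward-closed $\CODF$-$M$-definable subset of $M$ is already $\RCF$-$M$-definable}, hence an interval with endpoint in $M\cup\{\pm\infty\}$, contradicting the gap. This is exactly where the genericity of the derivation enters: writing $S=\{c:\nabla_N c\in R\}$ for an $\RCF$-definable $R\subseteq M^{N+1}$, Singer's density axioms let one drive the jet $\nabla_N c$ into any nonempty open $\RCF$-definable set while $c$ ranges over a prescribed subinterval, so on any interval over which $R$ depends non-trivially on the higher-derivative coordinates both $S$ and its complement are dense there --- incompatible with $S$ being downward-closed. Forcing $R$ to be derivative-free on every interval makes $S$ an $\RCF$-definable down-set, so $\sup S\in M$, the desired contradiction. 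Combining the two implications with Theorem~\ref{thm:MS} and the coordinatewise reduction completes the argument.
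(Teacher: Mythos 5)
First, note that the paper does not prove this statement at all: it is imported verbatim as \cite[Proposition 3.6]{brouette}, so there is no in-paper argument to compare yours against. On its own terms, your architecture is sound and, as far as I can tell, close to Brouette's actual route: identify the real closure $F$ of the ordered differential field generated by $M\cup\{a\}$ with the prime $\RCF$-model over $M\cup\nabla a$, invoke Marker--Steinhorn (Theorem \ref{thm:MS}) for the reduct, and translate between $\CODF$-formulas in $x$ and quantifier-free ordered-ring formulas in the jets via Singer's quantifier elimination. The right-to-left implication is complete as written: substituting the ($M$-valued) jet $\nabla_N c$ of a parameter into the $\RCF$-definition and precomposing with the definable map $y\mapsto\nabla_N y$ does produce a $\varphi$-definition over $M$, and the coordinatewise/finite-stage reductions for the infinite tuple are unproblematic.

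The genuine gap is in the converse, exactly where you predicted: the ``tameness statement'' that every downward-closed $\CODF$-$M$-definable subset of $M$ is semialgebraic (equivalently has a supremum in $M\cup\{\pm\infty\}$) is the entire content of that direction, and your two-line density sketch does not establish it. Concretely, writing $S=\{c:\nabla_N c\in R\}$, the dichotomy ``either $R$ is derivative-free over an interval or both $S$ and its complement are dense there'' is false as stated: when the fibre $R_c$ is a nonempty set of dimension $<N$ on an interval (e.g.\ $R_c=\{w: w_1=0\}$, giving the constant field), $R$ depends nontrivially on the derivative coordinates yet the density of $S$ there is not controlled by driving jets into \emph{open} sets; the argument must instead be run on the semialgebraic loci $D=\{c:\dim(M^N\setminus R_c)<N\}$ and $E=\{c:\dim R_c<N\}$, using that a clopen down-set equals its interior to show $S\subseteq\overline{D}$ and $M\setminus S\subseteq\overline{E}$, and then o-minimality of $\RCF$ in the $c$-variable to locate the cut at a point of $M$. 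You also use, without stating it, the precise ``jet density'' consequence of Singer's axioms (for every nonempty semialgebraic open $U\subseteq M^{N+1}$ there is $c$ with $\nabla_N c\in U$, densely in the projection of $U$). The lemma you need is true --- it is essentially the statement that models of $\CODF$ have o-minimal open core (Michaux--Rivi\`ere; see also Brouette's description of definable cuts) --- but as written your proof of it does not go through, and since everything else in the hard direction reduces to it, the proposal should either cite that result or carry out the fibre-dimension case analysis in full.
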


\begin{corollary}\label{cor:CODF_elementary} Then class of stably embedded pairs of models of $\CODF$ is $\cL_P$-elementary. 
\end{corollary}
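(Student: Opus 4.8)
The goal is to show that the class of stably embedded pairs of models of $\CODF$ forms an $\cL_P$-elementary class, exactly parallel to Corollary \ref{cor:omin_elementary} for o-minimal theories. The plan is to combine Lemma \ref{lem:stably_def} with Brouette's characterization (the theorem immediately preceding the statement) to rephrase stable embeddedness purely in terms of a Dedekind-completeness condition, and then to observe that this condition is expressible by an $\cL_P$-theory.

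First I would recall, via Lemma \ref{lem:stably_def}, that an extension $M \prec N$ is stably embedded precisely when $\tp(a/M)$ is definable for every tuple $a$ in $N$. By Brouette's theorem, each such type is definable if and only if $M$ is Dedekind complete in the real closure of the ordered differential field generated by $M \cup \{a\}$. So the pair $(N,M)$ is stably embedded if and only if for every finite tuple $a$ from $N$, no element of that real closure realizes a cut over $M$ that is filled from both sides strictly inside $M$ but not attained. The crucial point is that the differential field generated by $M \cup \{a\}$ and its real closure are both contained in $N$ (since $N \models \CODF$ is already a real closed ordered differential field containing $M$ and $a$), so the relevant elements witnessing a failure of Dedekind completeness live inside $N$ and can be quantified over in the pair language $\cL_P$, where $P$ picks out $M$.

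The key step is therefore to write down an $\cL_P$-sentence scheme asserting Dedekind completeness of $P$ inside the real closure of $\langle P, a\rangle$. Concretely, for each $n$ I would express: for every $n$-tuple $a$ and every element $b$ lying in the real closure of the ordered differential subfield generated by $P \cup \{a\}$, if the cut that $b$ determines over $P$ is bounded (there exist elements of $P$ above and below $b$), then that cut is realized in $P$. The membership of $b$ in the real closure of $\langle P, a \rangle$ is itself $\cL_P$-definable: an element $b \in N$ lies there exactly when it is algebraic over the field generated by $P$ together with the iterated derivatives of the $a_i$, which is captured by a disjunction over polynomial-witness formulas with coefficients in $P$ applied to the derivatives $a_i, a_i', a_i'', \dots$. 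Quantifying over finitely many derivatives for each bound on polynomial degree and differential order gives a countable scheme of $\cL_P$-sentences whose conjunction holds in $(N,M)$ exactly when the Dedekind condition holds for all tuples.

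I expect the main obstacle to be the bookkeeping needed to express ``$b$ belongs to the real closure of the differential field generated by $M \cup \{a\}$'' as an $\cL_P$-formula. Because $\CODF$ is not a field theory but a theory of \emph{differential} fields, the generated object involves all the higher derivatives $a_i^{(k)}$, so one cannot bound a priori which derivatives are needed; the resolution is that definability of a single type $\tp(a/M)$ only ever concerns the given finite tuple $a$, and membership in the real closure is a union over differential-polynomial relations of increasing order and degree, each of which is $\cL$-definable with parameters in $P$. Assembling these into a single elementary theory, rather than a single sentence, suffices: elementarity of the class only requires that stable embeddedness be cut out by \emph{some} set of $\cL_P$-sentences. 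Once this expressibility is in hand, the equivalence ``$(N,M) \in \SE(\CODF)$ if and only if $(N,M)$ satisfies the scheme'' follows immediately from Brouette's theorem and Lemma \ref{lem:stably_def}, completing the proof. \qed
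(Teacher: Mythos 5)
Your argument is correct: Lemma \ref{lem:stably_def} together with Brouette's characterization does reduce stable embeddedness of $(N,M)$ to a Dedekind-completeness condition, and your scheme of $\cL_P$-sentences (one for each complexity bound on the differential-algebraic witness that $b$ lies in the real closure $R_a$ of the differential field generated by $P\cup\{a\}$) does axiomatize the class, because a universally quantified implication whose hypothesis is a countable disjunction splits into a countable conjunction of first-order sentences. However, the bookkeeping that you single out as the main obstacle can be avoided entirely, which is why the paper treats the corollary as immediate, exactly as in the o-minimal case where it says only that ``being Dedekind complete is expressible in $\cL_P$.'' The point is that each $R_a$ is contained in $N$ (as $N\models\CODF$ is real closed and closed under the derivation), and every single element $b\in N$ already lies in $R_b$. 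Hence ``$M$ is Dedekind complete in $R_a$ for every finite tuple $a$ from $N$'' is equivalent to ``$M$ is Dedekind complete in $N$'': one direction because Dedekind completeness is inherited by the subextensions $M\subseteq R_a\subseteq N$, the other by taking $a=b$. The latter condition is a single $\cL_P$-sentence, so $\SE(\CODF)$ is axiomatized by $T_P$ together with that one sentence, and no $\cL_P$-definition of membership in $R_a$ is ever needed. Your longer route reaches the same conclusion and contains no gap, but the extra scheme buys nothing; the collapse to Dedekind completeness of $M$ in $N$ itself is the intended one-line argument.
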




A similar situation holds in Presburger arithmetic. The following characterization of definable types over models in $T$ is due to G. Conant and S. Vojdani (see \cite{f-generic_pres}). 

\begin{theorem}\label{thm:pres1} Let $M$ be a model of Presburger arithmetic. A type $p\in S_n(M)$ is definable if and only if for every realization $a\models p$, $M\prec M(a)$ is an end-extension.  \qed
\end{theorem}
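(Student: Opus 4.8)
The plan is to run everything through quantifier elimination for Presburger arithmetic in the language $\{+,-,0,1,<,(\equiv_m)_{m\geq 2}\}$, by which every $\cL$-formula is a Boolean combination of order comparisons $\lambda(x)<\mu(y)$ and congruences $\lambda(x)\equiv\mu(y)\pmod m$, for $\Z$-linear forms $\lambda,\mu$. The family of formulas admitting a $\varphi$-definition for $p$ is closed under Boolean combinations, so by quantifier elimination it suffices to produce a $\varphi$-definition for each such atomic family. Recall first that, for $M$ a model of Presburger arithmetic (equivalently, a $\Z$-group), one has $M/mM\cong \Z/m\Z$, so for any $\Z$-linear form $\lambda$ and any realization $a\models p$ there is a fixed residue $r_{\lambda,m}\in\{0,\dots,m-1\}$ with $\lambda(a)\equiv r_{\lambda,m}\pmod m$; hence the congruence formulas are \emph{always} definable over $M$, the $\varphi$-definition of $\lambda(x)\equiv \mu(y)\pmod m$ being simply $\mu(y)\equiv r_{\lambda,m}\pmod m$. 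Thus $p$ is definable if and only if, for every $\Z$-linear form $\lambda$, the $\varphi$-definition of $\lambda(x)<y$ exists, that is, the \emph{cut} $C_\lambda\coloneqq\{c\in M : \lambda(a)<c\}$ realized by $\lambda(a)$ in $M$ is $M$-definable.

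The heart of the argument is a description of the definable cuts of $M$. I would prove the lemma that every upward-closed $M$-definable subset of $M$ is a ray $\{c:c>t\}$ or $\{c:c\geq t\}$ (possibly empty or all of $M$): indeed, by quantifier elimination each definable subset of $M$ in one variable is eventually periodic in both directions, and an upward-closed eventually periodic set must, past its threshold, contain a full period and hence everything above it. Consequently a definable cut is either realized in $M$ or equal to $+\infty$ or $-\infty$: if $C_\lambda=\{c:c>t\}$ with $t\in M$ then, since $M$ has least positive element $1$, the constraints $t<\lambda(a)<t+1$ are contradictory, forcing $\lambda(a)\in M$; the only remaining possibilities are $C_\lambda=\varnothing$ (that is, $\lambda(a)>M$) and $C_\lambda=M$ (that is, $\lambda(a)<M$). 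In other words, for $\lambda(a)\notin M$ the cut $C_\lambda$ is definable if and only if $\lambda(a)$ is infinite over $M$.

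It remains to match this with the end-extension condition. Here I would use that $\dcl(Ma)$ consists exactly of the elements $b$ with $k\,b=\lambda(a)+m$ for some $k\in\Z_{>0}$, $m\in M$ and some $\Z$-linear form $\lambda$; such a $b$ lies in $M$ precisely when $\lambda\equiv 0$, and otherwise $b$ is infinite over $M$ exactly when $\lambda(a)$ is. Hence $M\prec M(a)$ is an end-extension---no element of $M(a)\setminus M$ sits in a bounded gap of $M$, equivalently every such element is larger in absolute value than all of $M$---if and only if $\lambda(a)$ is realized in or infinite over $M$ for every linear form $\lambda$. Combining the three steps: if $p$ is definable then every $C_\lambda$ is definable, so every $\lambda(a)$ is in or infinite over $M$, so $M(a)$ is an end-extension; conversely, if $M(a)$ is an end-extension then every $C_\lambda$ is empty, all of $M$, or realized, hence definable, and together with the automatic definability of the congruence formulas and quantifier elimination this yields a full scheme of definition for $p$.

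The step I expect to be most delicate is the passage from the one-variable picture to tuples: one must check that controlling \emph{all} $\Z$-linear forms $\lambda(x)$, rather than the individual coordinates $x_i$, is exactly what the end-extension condition on $M(a)$ encodes, and that $\dcl(Ma)$ has the claimed description in an arbitrary $\Z$-group, where divisibility is only partial. The cut lemma itself, resting on eventual periodicity, is routine once quantifier elimination is invoked.
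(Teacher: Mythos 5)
The paper does not actually prove Theorem \ref{thm:pres1}: it is quoted from Conant--Vojdani \cite{f-generic_pres} and stamped with \qed, so there is no internal proof to compare against. Your argument is a correct self-contained proof along the standard lines, and its skeleton is sound: quantifier elimination reduces everything to atomic formulas; the congruence conditions $\lambda(x)\equiv\mu(y)\pmod m$ are definable for free because $\lambda(a)$ has a residue mod $m$ determined by $p$ itself; so definability of $p$ is equivalent to definability of each cut $C_\lambda$; discreteness forces a definable cut to be realized in $M$ or to sit at $\pm\infty$; and the description of $M(a)=\dcl(Ma)$ as $\{b: kb=\lambda(a)+m\}$ translates ``every $\lambda(a)$ is in $M$ or infinite over $M$'' into ``$M$ is convex in $M(a)$,'' which is the end-extension condition. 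Two small repairs. First, your cut lemma is more cleanly obtained from the least-element principle than from ``eventual periodicity'' (a notion that is awkward in a nonstandard model): the schema ``every nonempty definable set bounded below has a least element'' is first-order in each formula and transfers from $\Z$, and it immediately gives that a proper nonempty upward-closed definable set is $\{c:c\geq t\}$, whence $t\leq\lambda(a)<t+1$ and $\lambda(a)=t\in M$. Second, the clause ``such a $b$ lies in $M$ precisely when $\lambda\equiv 0$'' is wrong as stated: $b=(\lambda(a)+m)/k$ lies in $M$ precisely when $\lambda(a)\in M$ (which can happen for $\lambda\neq 0$, e.g.\ when $a_1-a_2\in M$); this does not damage the argument, since the case split you actually need is $\lambda(a)\in M$ versus $\lambda(a)\notin M$, and in the latter case $b$ is infinite over $M$ if and only if $\lambda(a)$ is. With those adjustments, and the (standard, but worth writing out) verification that $\dcl(Ma)$ is the relative divisible hull of the group generated by $M$ and $a$, your proof is complete.
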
  

 \begin{corollary}\label{cor:pres} An elementary pair $(N, M)$ of models of Presburger arithmetic is stably embedded if and only if it is an end-extension. In particular, the class $\SE(T)$ is an elementary class in $\cL_P$. \qed
\end{corollary}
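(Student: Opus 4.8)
The plan is to read off the equivalence directly from the two quoted characterizations and then observe that the resulting condition is first-order in $\cL_P$. By Lemma \ref{lem:stably_def}, the pair $(N,M)$ is stably embedded exactly when $\tp(a/M)$ is definable for every finite tuple $a$ from $N$; by Theorem \ref{thm:pres1}, each such type is definable exactly when $M\prec M(a)$ is an end-extension, where $M(a)$ denotes the prime model over $Ma$. The only ambient fact I need is that Presburger arithmetic is model complete (it eliminates quantifiers in the standard expanded language), so that $M(a)$ embeds elementarily into $N$ over $Ma$ and may be identified with an intermediate structure $M\prec M(a)\preceq N$.

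For the implication end-extension $\Rightarrow$ stably embedded, I would fix a tuple $a$ from $N$ and note that the end-extension property passes from $N$ down to the intermediate model $M(a)$: any $c\in M(a)\subseteq N$ lying strictly between two elements of $M$ already lies in $M$ because $M$ is an end-extension of $N$. Hence $M\prec M(a)$ is an end-extension, so $\tp(a/M)$ is definable by Theorem \ref{thm:pres1}, and stable embeddedness follows from Lemma \ref{lem:stably_def}. For the converse, assume $(N,M)$ is stably embedded and let $b\in N$ satisfy $m_1<b<m_2$ for some $m_1,m_2\in M$. Then $\tp(b/M)$ is definable, so $M\prec M(b)$ is an end-extension; since $b\in M(b)$ is bounded by $M$, this forces $b\in M$. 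Thus no element of $N\setminus M$ is bounded by $M$, that is, $M\prec N$ is an end-extension.

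Finally, the end-extension condition is captured by the single universal $\cL_P$-sentence
\[
\forall x\,\forall y_1\,\forall y_2\,\big[(P(y_1)\wedge P(y_2)\wedge y_1<x\wedge x<y_2)\to P(x)\big],
\]
so $\SE(T)$ is axiomatized by $T_P$ together with this sentence and is therefore $\cL_P$-elementary. I expect essentially no difficulty here: the genuine content is entirely contained in the Conant--Vojdani theorem and in Lemma \ref{lem:stably_def}, and the corollary is a matter of assembling them and checking that the intermediate models $M(a)$ inherit the end-extension property in both directions. The one point that deserves care is matching the precise meaning of \emph{end-extension} used in Theorem \ref{thm:pres1} to the bounded-element formulation above, which is what makes the condition manifestly first-order.
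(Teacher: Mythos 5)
Your argument is correct and is exactly the route the paper intends (the corollary is stated with no written proof precisely because it follows by combining Lemma \ref{lem:stably_def} with Theorem \ref{thm:pres1} in the way you describe, plus the observation that the end-extension condition is a single universal $\cL_P$-sentence). The only cosmetic slips are the inverted phrase ``because $M$ is an end-extension of $N$'' and the appeal to model completeness where what is really used is that Presburger arithmetic has definable Skolem functions, so that the prime model $M(a)=\dcl(Ma)$ sits elementarily inside $N$; neither affects the substance.
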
  

\subsection{Stable embedded pairs of valued fields}

In what follows we gather the corresponding characterization of stably embedded pairs of models for $\ACVF$, $\RCVF$ and $\PCF_d$. We need first the following notation and terminology for induced structures. Let $M$ be an $\cL$-structure and $\sS$ be an imaginary sort in $M^{\eq}$. We let $\cL_\sS$ be the language having a predicate $P_R$ for every $\cL$-definable (without parameters) subset $R\subseteq \sS^n(M)$. The structure $(\sS(M),\cL_\sS)$ in which every $P_R$ is interpreted as the set $R$ is called the \emph{induced structure on $\sS(M)$}. The sort $\sS$ is called \emph{stably embedded} if every $\cL^\eq$-definable subset of $\sS(M)$ is $\cL_\sS$-definable. The following lemma is left to the reader.  

\begin{lemma}\label{lem:eq} Let $T$ be an complete $\cL$-theory and $(N,M)$ be a stably embedded pair of models of $T$. Let $\sS$ be an $\cL^{\eq}$-sort which is stably embedded (as a sort) in every model of $T$. Then the pair $\sS(M)\prec \sS(N)$ is stably embedded in $\cL_\sS$.  
\end{lemma}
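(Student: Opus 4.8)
The plan is to translate the entire statement into $\cL^{\eq}$, exploit that the \emph{real} pair $(N,M)$ remains stably embedded after passing to imaginaries, and finally discharge the extraneous imaginary parameters using the stable embeddedness of the sort $\sS$ inside $M$. The device that links the induced language $\cL_\sS$ with $\cL^{\eq}$ is the routine translation $\phi\mapsto\widetilde{\phi}$: every $\cL_\sS$-formula $\phi(x)$ rewrites as an $\cL^{\eq}$-formula $\widetilde{\phi}(x)$ all of whose variables and quantifiers range over the sort $\sS$ (replace each predicate $P_R$ by the $\cL^{\eq}$-formula defining $R$, and each $\cL_\sS$-quantifier by the corresponding sorted $\cL^{\eq}$-quantifier), so that $\sS(M')\models\phi(a)\iff (M')^{\eq}\models\widetilde{\phi}(a)$ for every $M'\models T$ and every tuple $a\in\sS(M')$.

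First I would record elementarity $\sS(M)\prec\sS(N)$ in $\cL_\sS$. Since $M\prec N$ yields $M^{\eq}\prec N^{\eq}$, for an $\cL_\sS$-formula $\phi$ and a tuple $a\in\sS(M)$ the chain $\sS(M)\models\phi(a)\iff M^{\eq}\models\widetilde{\phi}(a)\iff N^{\eq}\models\widetilde{\phi}(a)\iff\sS(N)\models\phi(a)$ gives the claim, using that $\sS(M)$ (resp.\ $\sS(N)$) is exactly the set of sort-$\sS$ points of $M^{\eq}$ (resp.\ $N^{\eq}$), so that sorted $\cL^{\eq}$-quantifiers and $\cL_\sS$-quantifiers agree.

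Next I would upgrade stable embeddedness of the pair from $\cL$ to $\cL^{\eq}$, i.e.\ show $M^{\eq}\prec N^{\eq}$ is stably embedded in $\cL^{\eq}$. By the $\cL^{\eq}$-analogue of Lemma \ref{lem:stably_def} it suffices to prove $\tp(e/M)$ is definable for every imaginary tuple $e$ from $N^{\eq}$. Writing $e=g(a)$ with $g$ a $0$-definable function and $a$ a real tuple from $N$, stable embeddedness of $M\prec N$ in $\cL$ makes $\tp(a/M)$ definable (Lemma \ref{lem:stably_def}), and since the pushforward of a definable type is definable, $\tp(e/M)=g_*\tp(a/M)$ is definable as well. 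Now given any $\cL_\sS(\sS(N))$-definable set $Y\subseteq\sS^n(N)$, say $Y=\{b:\sS(N)\models\phi(b,d)\}$ with $d\in\sS(N)$, the translation rewrites it as $Y=\{b\in\sS^n(N): N^{\eq}\models\widetilde{\phi}(b,d)\}$, an $\cL^{\eq}(N)$-definable subset of the sort $\sS^n$; hence $Y\cap\sS^n(M)$ is $\cL^{\eq}(M)$-definable, and as $\sS$ is stably embedded as a sort in $M$, every $\cL^{\eq}(M)$-definable subset of $\sS^n(M)$ is $\cL_\sS(\sS(M))$-definable, which concludes.

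I expect the crux to be the passage from $\cL$ to $\cL^{\eq}$ for the pair in the third step, together with careful bookkeeping of which parameters are admissible in which language: the induced-structure definitions allow parameters only from $\sS$, whereas the intermediate $\cL^{\eq}$-definitions may a priori use arbitrary imaginaries from $M^{\eq}$, and it is precisely the stable embeddedness of the sort $\sS$ inside $M$ that removes those extra parameters at the end.
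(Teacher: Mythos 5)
Your proof is correct and is precisely the routine argument the paper has in mind (the lemma is explicitly left to the reader): translate $\cL_\sS$ into $\cL^{\eq}$, upgrade stable embeddedness of the pair $(N,M)$ to $\cL^{\eq}$ via definability of types of imaginary tuples, and then use stable embeddedness of the sort $\sS$ inside $M$ to trade the resulting $\cL^{\eq}(M)$-parameters for parameters from $\sS(M)$. The only step worth writing out in full is the passage from $\cL$-definability of $\tp(a/M)$ to $\cL^{\eq}$-definability of $\tp(g(a)/M^{\eq})$: besides the pushforward along $g$, one must allow \emph{imaginary} parameter variables, which is handled by noting that for $y=\pi_E(w)$ the $\cL$-definition in the real variable $w$ is $E$-invariant and hence descends to the quotient sort.
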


For a valued field $(K,v)$ we let $\Gamma_K$ denote the value group, $\mathcal{O}_K$ its valuation ring, $k_K$ the residue field and $\res\colon \mathcal{O}_K\to k_K$ the residue map. Given a valued field extension $(K\subseteq L, v)$ and a subset $A\coloneqq \{a_1,\ldots,a_n\}\subseteq L$, we say that $A$ is \emph{$K$-valuation independent} if for every $K$-linear combination $\sum_{i=1}^n c_ia_i$ with $c_i\in K$, 
\[
v\left(\sum_{i=1}^n c_ia_i\right) = \min_{i}(v(c_ia_i)). 
\]
The extension $L|K$ is called \emph{$vs$-defectless}\footnote{This is the same as separated as in \cite{baur} and \cite{delon-sep}} if every finitely generated $K$-vector subspace $V$ of $L$ admits a \emph{$K$-valuation basis}, that is, a $K$-valuation independent set which spans $V$ over $K$. 

\begin{remark}\label{rem:stablyemb}
By standard arguments, both the value group and the residue field are stably embedded sorts in $\ACVF$, $\RCVF$ and $\PCF_d$. As a corollary of Lemma \ref{lem:eq} we obtain that if $(K\prec L,v)$ is a stably embedded pair of models of any of these theories, then, the pairs $\Gamma_K\prec \Gamma_L$ and $k_K\prec k_L$ are stably embedded in their respective induced structure languages. 
\end{remark}

Let us now explain how to show that the class $\SE(T)$ is $\cL_P$-elementary for $T$ either $\ACVF$, $\RCVF$ or $\PCF_d$. In all three cases, the result will follow from the following theorem:  

\begin{theorem}\label{thm:valued-charac} Let $T$ be either $\ACVF$, $\RCVF$ or $\PCF_d$. Let $K\prec L$ be a pair of models of $T$. Then the following are equivalent
\begin{enumerate}
\item the pair $K\prec L$ is stably embedded 
\item the valued field extension $L|K$ is $vs$-defectless, the pairs $\Gamma_K\prec\Gamma_L$ and $k_K\prec k_L$ are stably embedded.  
\end{enumerate}
\end{theorem}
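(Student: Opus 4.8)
I would prove Theorem \ref{thm:valued-charac} by establishing each implication, with the bulk of the work going into $(1)\Rightarrow(2)$ and a transfer-principle argument for $(2)\Rightarrow(1)$. The backward direction $(2)\Rightarrow(1)$ is the conceptual heart: I want to show that stable embeddedness of the value group and residue field, \emph{together with} the $vs$-defectless hypothesis, forces every $\cL(L)$-definable subset of $K^n$ to be $\cL(K)$-definable. The strategy is to invoke quantifier elimination (in the appropriate language for each of $\ACVF$, $\RCVF$, $\PCF_d$) to reduce an arbitrary definable set to its building blocks, which by elimination live in the value group and residue field sorts via the valuation and residue maps applied to polynomial (or definable) functions. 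Given a tuple $a\in L$, by Lemma \ref{lem:stably_def} it suffices to show $\tp(a/K)$ is definable, and this reduces to controlling $v(f(a))$ and $\res(g(a))$ for definable $f,g$; the defectlessness hypothesis is exactly what lets me compute valuations of $K$-linear combinations coordinatewise, thereby pushing the analysis into $\Gamma$ and $k$, where the respective stable-embeddedness assumptions finish the job.

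For $(1)\Rightarrow(2)$, the stable embeddedness of $\Gamma_K\prec\Gamma_L$ and $k_K\prec k_L$ comes essentially for free from Remark \ref{rem:stablyemb} (an application of Lemma \ref{lem:eq}, using that $\Gamma$ and $k$ are stably embedded sorts). So the real content here is deriving $vs$-defectlessness of $L|K$ from stable embeddedness of the pair. The plan is contrapositive: if $L|K$ fails to be $vs$-defectless, I would produce a finitely generated $K$-vector subspace $V\subseteq L$ with no $K$-valuation basis, and from the resulting defect extract an $\cL(L)$-definable subset of $K^n$ that cannot be $\cL(K)$-definable — typically by exhibiting a family of valuations of linear combinations $v(\sum c_i a_i)$ as the $c_i$ range over $K$ that encodes a cut or a configuration in $\Gamma_K$ not $\cL(K)$-definable, contradicting stable embeddedness via Lemma \ref{lem:stably_def}.

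\textbf{Uniformity across the three theories.}
Rather than three separate arguments, I would isolate the common mechanism: each of $\ACVF$, $\RCVF$, $\PCF_d$ admits a well-understood quantifier elimination relative to the value group and residue field (Robinson's for $\ACVF$, the Cherlin--Dickmann/Macintyre-type results for $\RCVF$ and $\PCF_d$), and in each the sorts $\Gamma$ and $k$ are stably embedded. The defectlessness condition is the field-theoretic translation of ``no hidden algebraic relation spoils the coordinatewise computation of valuations,'' and it is the single hypothesis that makes the QE reduction go through uniformly. I would phrase the core lemma once — namely that, under $vs$-defectlessness, the type of a tuple over $K$ is determined by the induced data in $\Gamma$ and $k$ — and then cite the three QE statements to instantiate it.

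\textbf{The main obstacle.}
The hard part will be $(1)\Rightarrow(2)$, specifically extracting a concrete witness to non-definability from a failure of $vs$-defectlessness. Defect is a subtle phenomenon (it can arise from immediate extensions where $\Gamma$ and $k$ do not change at all), so the failure of a valuation basis must be converted into genuinely new definable structure over $K$ inside $L$; making this conversion precise — and ensuring the witnessing definable set really is not $\cL(K)$-definable rather than merely appearing so — is where the argument is most delicate and where the three theories may require case-specific care (in particular $\PCF_d$, where the value group is a $\Z$-group and finite ramification complicates the coordinatewise valuation computation).
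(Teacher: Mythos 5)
Your plan for $(2)\Rightarrow(1)$ matches the paper's treatment of $\RCVF$ quite closely: there the proof quantifier-eliminates in $\cL_{\mathrm{div}}^{\leqslant}$, uses $vs$-defectlessness to write a polynomial $P$ as $\sum_i a_iP_i$ with the $a_i$ $K$-valuation independent and the $P_i$ over $K$, partitions $K^m$ according to which terms achieve the minimal valuation, and after normalizing reduces the sign condition $0<P(x)$ to a condition on residues, where stable embeddedness of $k_K\prec k_L$ finishes. For $\ACVF$ both directions are simply quoted from \cite[Theorem 1.9]{CD}, and your contrapositive strategy for $(1)\Rightarrow(2)$ (extracting a witness of non-definability from a defect) is in the same spirit as that reference. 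So for those two theories your route and the paper's essentially coincide, modulo the fact that yours is a plan and the combinatorics of the valuation-independent decomposition (the partition into the sets where a given subset of terms achieves the minimum, and the reduction to the case where all terms do) still has to be carried out.

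Where you diverge --- and where your plan is at real risk --- is $\PCF_d$. You propose to run the same QE reduction uniformly, but the paper does not: for $\PCF_d$ it first shows, using that $\Gamma_L$ is an end-extension of $\Gamma_K$ (the Presburger analysis of Corollary \ref{cor:pres}), that every $y\in L$ decomposes as $x+a$ with $a\in K$ and $|v(x)|>\Gamma_K$; it deduces $vs$-defectlessness from this via Lemma \ref{lem:stpart}, and for $(2)\Rightarrow(1)$ it coarsens the valuation by the convex hull of $\Gamma_K$ and invokes Delon's theorem on stably embedded pairs of henselian fields of characteristic $0$, checking that the coarse residue field of $L$ is isomorphic to $K$. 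A direct QE argument in $\cL_{\mathrm{Mac}}$ would additionally have to control the Macintyre power predicates $P_n$ evaluated at polynomials over $L$ on points of $K^m$, which is not covered by ``valuations and residues of $K$-linear combinations,'' and your single core lemma as stated does not account for this; this is presumably why the paper switches tools. For $(1)\Rightarrow(2)$ in $\PCF_d$ the concrete witness you ask for is an element \emph{limit over} $K$ (one generating an immediate extension), whose type over $K$ is not definable --- your instinct that the defect hides in immediate extensions is exactly right, but the proposal stops short of producing this witness. As written, the $\PCF_d$ case is a genuine gap rather than a worked alternative.
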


For $\ACVF$ the above Theorem is precisely the content of \cite[Theorem 1.9]{CD}. For $\RCVF$ and $\PCF_d$ the result is new and the corresponding proof is presented in Sections \ref{sec:appendix_RCVF} and \ref{sec:appendix_PCF}. Note that both in $\ACVF$ and $\PCF_d$ the condition on the residue field extension can be removed. Indeed, in $\ACVF$, the induced structure on the residue field is that of a pure algebraically closed field, and therefore (since such a structure is stable) the corresponding extension $k_K\prec k_L$ is always stably embedded. In $\PCF_d$, the extension $k_K\prec k_L$ is finite and hence trivially stably embedded.   

\begin{corollary}\label{cor:SE-valued}
Let $T$ be either $T$ either $\ACVF$, $\RCVF$ or $\PCF_d$. Then the class $\SE(T)$ is elementary in the language of pairs. 
\end{corollary}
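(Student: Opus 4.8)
The plan is to deduce elementarity of $\SE(T)$ from the characterization in Theorem \ref{thm:valued-charac} by showing that each of the three conditions appearing in clause (2) is expressible by a set of $\cL_P$-sentences. Since elementarity of a class means it is axiomatizable, it suffices to verify that membership of a pair $(L,K)$ in $\SE(T)$ is equivalent to $(L,K)$ satisfying a fixed collection of first-order sentences in the language of pairs $\cL_P$; by Theorem \ref{thm:valued-charac} this reduces to expressing ``$L|K$ is $vs$-defectless'', ``$\Gamma_K\prec\Gamma_L$ is stably embedded'', and ``$k_K\prec k_L$ is stably embedded''.

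First I would handle the two sorts $\Gamma$ and $k$. By Remark \ref{rem:stablyemb} these sorts are stably embedded in each model of $T$, so the induced structure on each is governed by a fixed (complete) theory: for $\Gamma$ this is $\DOAG$, and for $k$ it is $\ACF$ in the $\ACVF$ case (where, as noted after Theorem \ref{thm:valued-charac}, the residue condition is automatic by stability) and a finite extension condition in the $\PCF_d$ case (again automatic). Thus the only genuinely nontrivial residue/value-group condition is for $\RCVF$, where $\Gamma$ is a model of $\DOAG$ and $k$ is a real closed field. In each case the induced structure is o-minimal (or the pure theory of an algebraically closed field), so by the already-established elementarity results for those theories --- Corollary \ref{cor:omin_elementary} for the o-minimal sorts, together with the triviality of the stable case --- the statement ``the sort-pair is stably embedded'' is $\cL_P$-axiomatizable. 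One must take care that stable embeddedness of the pair of sorts, expressed in the induced-structure language, translates into $\cL_P$-sentences about the ambient pair; this is routine since every $\cL_\sS$-predicate is by definition $\cL$-definable, hence the induced-structure formulas pull back to $\cL_P$-formulas using the predicate $P_\sS$.

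The main obstacle is the first condition, $vs$-defectlessness of $L|K$. Unlike the two sort conditions, this is a statement about valuation bases of arbitrary finitely generated $K$-vector subspaces of $L$, and one must show it can be captured by first-order $\cL_P$-axioms. The natural approach is a dimension-by-dimension scheme: for each $n$, write a sentence $\sigma_n$ asserting that every $n$-element $K$-valuation-independent tuple contained in an $(n{+}1)$-dimensional $K$-subspace extends to a $K$-valuation basis of that subspace, or equivalently that no finitely generated subspace exhibits ``new defect''. The key point making this expressible is that $K$-valuation independence of a finite tuple $(a_1,\dots,a_n)$ is an $\cL_P$-condition --- it quantifies over coefficients $c_i$ ranging in $P$ and compares the valuation of $\sum c_i a_i$ with $\min_i v(c_i a_i)$ --- and the existence of a valuation basis for a fixed-dimension subspace is likewise a bounded existential statement over $P$-tuples. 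I expect the delicate part to be verifying that the collection $\{\sigma_n\}_n$ genuinely axiomatizes $vs$-defectlessness, i.e.\ that defectlessness of all finitely generated subspaces follows from the uniform finite-dimensional versions; this may require an inductive argument reducing the basis-existence for higher-dimensional subspaces to lower-dimensional ones, or invoking the standard equivalence (from \cite{baur}, \cite{delon-sep}) between $vs$-defectlessness and a local separatedness condition that is manifestly first-order. Once all three conditions are seen to be $\cL_P$-axiomatizable, taking the union of the three axiom schemes together with $T_P$ yields an $\cL_P$-theory whose models are exactly the stably embedded pairs, completing the proof.
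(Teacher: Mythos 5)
Your overall route is the same as the paper's: reduce via Theorem \ref{thm:valued-charac} to showing that each condition in clause (2) is $\cL_P$-axiomatizable, handle the sort conditions by quoting the already-established elementarity results, and write down an axiom scheme for $vs$-defectlessness. On the $vs$-defectless part you are in fact more cautious than you need to be: for each $n$ one can directly write an $\cL_P$-sentence saying that every $K$-subspace generated by $n$ elements of $L$ admits a $K$-valuation basis (valuation independence is a universal condition over coefficients in $P$, and both ``lies in the span'' and ``spans'' are existential over $P$-tuples), and the conjunction of these sentences over all $n$ \emph{is} the definition of $vs$-defectlessness --- no inductive reduction between dimensions is required. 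The paper simply asserts this elementarity without comment.

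There is, however, one concrete error. You claim that the induced structure on $\Gamma$ is always governed by $\DOAG$ and conclude that ``the only genuinely nontrivial residue/value-group condition is for $\RCVF$,'' handling all value-group pairs by Corollary \ref{cor:omin_elementary}. This is false for $\PCF_d$: the value group of a $p$-adically closed field is a $\Z$-group, i.e.\ a model of Presburger arithmetic, which is neither divisible nor o-minimal, so the o-minimal criterion (Dedekind completeness, via Marker--Steinhorn) does not apply. The value-group condition for $\PCF_d$ is genuinely nontrivial and is handled in the paper by the Presburger characterization: a pair of models of Presburger arithmetic is stably embedded if and only if it is an end-extension (Theorem \ref{thm:pres1} and Corollary \ref{cor:pres}), which is manifestly an elementary property of the pair. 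The repair is immediate --- replace the appeal to Corollary \ref{cor:omin_elementary} by Corollary \ref{cor:pres} in the $\PCF_d$ case --- but as written your argument does not cover that case.
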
 

\begin{proof}
By Theorem \ref{thm:valued-charac}, it suffices to show that the condition stated in part (2) is elementary in the language of pairs. Note that being a $vs$-defectless extension is an elementary property in the language of pairs, so we only need to show that having stably embedded value group and residue field extensions is an elementary property in the language of pairs. 

Concerning the value group, both in $\ACVF$ and $\RCVF$, the value groups extension is an elementary extension of models of an o-minimal theory (namely, divisible ordered abelian groups). Therefore, being stably embedded is elementary by Corollary \ref{cor:omin_elementary} and Remark \ref{rem:stablyemb}. For $\PCF_d$, the value group extension is an extension of models of Presburger arithmetic, and hence the result follows by Corollary \ref{cor:pres}. 

Regarding the residue field extension, as explained above, it only plays a role for $\RCVF$. In this case, the residue field extension corresponds to an elementary pair of real closed fields. As before, the result follows by Corollary \ref{thm:definabletypes} and Remark \ref{rem:stablyemb}. 
\end{proof}

It is worthy to acknowledge that recently, building on the present results, P. Touchard extend some of these results to other classes of henselian fields \cite{touchard-pair}.

\subsubsection{Stably embedded pairs of real closed valued fields}\label{sec:appendix_RCVF}

The study real closed valued fields in the language $\cL_{\mathrm{div}}^{\leqslant}$ which corresponds to the language of ordered rings extended by a binary predicate $\Div(x,y)$ which holds if and only if $v(x)\leqslant v(y)$. The $\cL_{\mathrm{div}}^{\leqslant}$-theory of real closed valued fields $\RCVF$ has quantifier elimination (see \cite{cherlin_dickmann}). 

\begin{proof}[Proof of Theorem \ref{thm:valued-charac} for $\RCVF$:] Let $(K\prec L,v)$ be a pair of real closed valued fields. 

$(1)\Rightarrow (2)$: By Remark \ref{rem:stablyemb}, the pairs $\Gamma_K\prec \Gamma_L$ and $k_K\prec k_L$ are stably embedded. That the extension is $vs$-defectless follows word for word as for algebraically closed valued fields in \cite[Theorem 1.9]{CD}. 

$(2)\Rightarrow (1)$: Let $X\subseteq L^m$ be an $\cL_{\mathrm{div}}^{\leqslant}$-definable set over $L$. We need to show that $X\cap K^m$ is $\cL_{\mathrm{div}}^{\leqslant}$-definable over $K$. By quantifier elimination, we may suppose that $X$ is defined by one of the following formulas 
\begin{enumerate}[$(i)$]
\item $v(P(x))\square v(Q(x))$ with $\square$ either $\leqslant$ or $<$, 
\item $0<P(x)$,
\end{enumerate}
where $P, Q\in L[X]$ with $X=(X_1,\ldots, X_m)$. When $X$ is defined by a formula as in $(i)$, one can proceed as in \cite[Theorem 1.9]{CD}, so it remains to show the result for $(ii)$. 

Since the extension $L|K$ is $vs$-defectless, there is a $K$-valuation independent set~$A\coloneqq \{a_1,\ldots,a_n\}\subseteq L$ and polynomials~$P_i\in K[X]$ with~$i\in I\coloneqq \{1,\ldots, n\}$ such that~$P(x)=\sum_{i\in I} a_iP_i(x)$. By \cite[Lemma 2.24]{bla-cubi-kuh}, we can further suppose that for every $i,j\in I$, if  $v(a_i)$ and $v(a_j)$ lie in the same coset modulo $\Gamma_K$, then $v(a_i) = v(a_j)$. Moreover, at the expense of multiplying $P_i$ by $-1$, we can suppose that $a_i>0$ for all $i\in I$. 

For each $\emptyset\neq J\subseteq I$, let $A_J$ be the set 
\[
A_J\coloneqq \{x\in K^m\mid v(\sum_{i\in I}a_iP_i(x)) = v(a_jP_j(x)) \text{ if and only if } j\in J\}.  
\]
By case $(i)$, we may suppose $A_J$ is definable over $K$. Further, since $A$ is $K$-valuation independent, the sets $A_J$ cover $K^m$ when $J$ varies over all non-empty subsets of $I$. Therefore, it suffices to show that $X\cap A_J$ is definable over $K$ for every $J\subseteq I$. Let us first show how to reduce to the case where $J=I$. If $J\neq I$, then for all $x\in A_J$ we have 
\[
0<P(x) \Leftrightarrow 0< \sum_{i\in J} a_iP_i(x) + \sum_{i\in I\setminus J} a_iP_i(x) \Leftrightarrow 0< \sum_{i\in J} a_iP_i(x),   
\]
and thus we obtain an equivalent formula where for all $i\in J$, $v(a_iP_i(x))$ is the same. Therefore without loss of generality it suffices to show the case $J=I$. Now, since for all $x\in A_I$, $v(a_iP_i(x))=v(a_jP_j(x))$ for all $i,j\in I$, $v(a_i)$ and $v(a_j)$ are in the same coset modulo $\Gamma_K$, and hence $v(a_i)=v(a_j)$ for all $i,j\in I$. Also $v(P_i(x))=v(P_j(x))$ for all $i,j\in I$. Multiplying by a suitable constant $c\in L$, we may suppose that $v(a_i)=0$ for all $i\in I$. Similarly, multiplying by a suitable constant $c'\in K$, we may suppose that $v(P_i(x))=0$ for all $x\in A_I$. We conclude by noting that in this situation, for all $x\in A_I$   
\[
0<P(x) \Leftrightarrow 0<\res(\sum_{i=1}^n a_i P_i(x))  \Leftrightarrow 0<\sum_{i=1}^n \res(a_i)\res(P_i(x)). 
\]
Since $k_K$ is stably embedded in $k_L$, the set $\{y\in k_K^m\mid 0<\sum_{i=1}^n \res(a_i) y_i\}$ is definable over $k_K$. Lifting the parameters, we obtain that $X\cap A_I$ is definable over $K$. 
\end{proof}

\subsubsection{Stably embedded pairs of models of $\PCF_d$}\label{sec:appendix_PCF}

Let $K$ be a finite extension of $\Q_p$. Let $d$ be the $p$-rank of the extension (see \cite[Section 2]{prestel2005}). The theory $\PCF_d$ is the theory of $K$ in the language $\cL_{\mathrm{Mac}}$ with finitely many new constant symbols as introduced in \cite[Theorem 5.6]{prestel2005}. Such a theory admits elimination of quantifiers. 

We need some preliminary lemmas. 

\begin{lemma}\label{lem:stpart} Let $(K\subseteq L, v)$ be a valued field extension. Suppose that every $y\in L$ is of the form $y=x+a$ with $a\in K$ and $x\in L$ such that $|v(x)|>\Gamma_K$. Then the extension is $vs$-defectless. 
\end{lemma}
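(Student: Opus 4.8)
The plan is to pass to the coarsening of $v$ by the convex hull of $\Gamma_K$ and to reinterpret the hypothesis as a statement about its residue field. Let $\Delta$ be the convex hull of $\Gamma_K$ in $\Gamma_L$, a convex subgroup, and let $w\colon L^\times\to\bar\Gamma\coloneqq\Gamma_L/\Delta$ be the associated coarse valuation; note that $w(c)=0$ for every $c\in K^\times$ since $v(c)\in\Gamma_K\subseteq\Delta$, and that the condition $|v(x)|>\Gamma_K$ says exactly $v(x)\notin\Delta$, i.e. $w(x)\neq 0$. The first step is to extract from the hypothesis the following claim: if $y\in L$ satisfies $w(y)=0$, then there is $a\in K^\times$ with $w(y-a)>0$. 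Writing $y=x+a$ as in the hypothesis and running a short case analysis on $w(x)$ (using that $w(a)\in\{0,\infty\}$ and that $w$ is a valuation) forces $x=0$ or $w(x)>0$, and in either case $w(y-a)>0$. Dividing $y_1$ by $y_2$ then yields the form I will actually use: whenever $w(y_1)=w(y_2)$ for $y_1,y_2\in L^\times$, there is $c\in K^\times$ with $w(y_1-cy_2)>w(y_1)$.

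Next I would fix a finitely generated, hence finite-dimensional, $K$-subspace $V\subseteq L$, say of dimension $n$, and build a $K$-valuation basis. The basic observation is that nonzero elements of $V$ with pairwise distinct $w$-values are automatically $K$-linearly independent (in a vanishing $K$-combination the term of least $w$-value would be unique), so $V$ realizes at most $n$ distinct $w$-values. For each $\bar\gamma\in\bar\Gamma$ the sets $V_{\ge\bar\gamma}\coloneqq\{y\in V:w(y)\ge\bar\gamma\}$ and $V_{>\bar\gamma}$ are $K$-subspaces (again because $w(c)=0$ for $c\in K^\times$), and the consequence of the claim shows that any two elements of $w$-value exactly $\bar\gamma$ are $K$-proportional modulo $V_{>\bar\gamma}$; hence every graded piece $V_{\ge\bar\gamma}/V_{>\bar\gamma}$ has dimension at most $1$. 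Since these graded dimensions sum to $n$ (the filtration is finite by the previous bound) and each is at most $1$, there are exactly $n$ values $\bar\gamma_1<\cdots<\bar\gamma_n$ with a one-dimensional graded piece.

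To conclude I would pick $e_i\in V$ with $w(e_i)=\bar\gamma_i$. By the independence observation the $e_i$ form a basis of $V$, and I claim they are $K$-valuation independent. Indeed, for any $c_1,\dots,c_n\in K$ the nonzero terms $c_ie_i$ have $w(c_ie_i)=\bar\gamma_i$, so they lie in pairwise distinct cosets of $\Delta$; in particular their $v$-values $v(c_i)+v(e_i)$ are pairwise distinct, and the ultrametric inequality gives $v(\sum_i c_ie_i)=\min_i\bigl(v(c_i)+v(e_i)\bigr)$. Thus $\{e_1,\dots,e_n\}$ is a $K$-valuation basis of $V$, and since $V$ was arbitrary the extension $L\mid K$ is $vs$-defectless.

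The crux, and the only place the full strength of the hypothesis is used, is the claim of the first paragraph --- equivalently the statement that each coarse graded piece is at most one-dimensional, which amounts to saying that the residue field of the coarsening $w$ is exactly the image of $K$. Everything afterwards is the soft linear algebra of valued vector spaces; the one point to keep an eye on is that distinct coarse values force distinct fine values, so that the $w$-separated basis produced above is automatically $v$-separated.
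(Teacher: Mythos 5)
Your proof is correct, and it takes a genuinely different (more structural) route than the paper's. The paper argues by induction on $n=\dim_K V$: starting from a basis $\{y_1,\dots,y_n\}$ and elements $x_1,\dots,x_{n-1}$ of the span of $y_1,\dots,y_{n-1}$ whose values lie in pairwise distinct $\Gamma_K$-cosets, it repeatedly applies the hypothesis to subtract $K$-multiples of the $x_i$ from a $K$-multiple of $y_n$, each step pushing the value strictly up modulo $\Gamma_K$, until after at most $n$ iterations the value lands in a fresh coset; it then quotes a lemma of Prestel--Roquette to conclude that elements in pairwise distinct $\Gamma_K$-cosets are valuation independent. You package the same mechanism through the coarsening $w$ by the convex hull $\Delta$ of $\Gamma_K$: the hypothesis says precisely that the residue field of $w$ is the image of $K$, hence each graded piece $V_{\geq\bar\gamma}/V_{>\bar\gamma}$ is at most one-dimensional, and a dimension count yields $n$ elements of $V$ with pairwise distinct $w$-values, whose $K$-valuation independence you verify directly (distinct $\Delta$-cosets give distinct $v$-values, so the ultrametric inequality is an equality). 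This replaces the paper's somewhat fiddly iterative case analysis by soft linear algebra over the coarse filtration, avoids the external citation for valuation independence, and dovetails nicely with the coarsening $w$ that the paper introduces immediately afterwards for Delon's theorem. In a final write-up you should make explicit the spanning step hidden in ``the graded dimensions sum to $n$'': one reduces an arbitrary element of $V$ through the finitely many realized $w$-values, which terminates because each graded piece is one-dimensional and the filtration is finite; this is exactly the point where the finiteness bound you established is needed.
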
 

\begin{proof}
Let $V\subseteq L$ be a $K$-vector space of dimension $n$. Let us show that $V$ contains elements $\{x_1,\ldots, x_n\}$ such that each $v(x_i)$ lies in a different $\Gamma_K$-coset. By \cite[Lemma 3.2.2]{prestel2005}, this implies that $\{x_1,\ldots, x_n\}$ is a $K$-valuation basis for $V$. We proceed by induction on $n$. Let $\{y_1,\ldots, y_n\}$ be a basis for $V$. For $n=1$ the result is trivial (take $x_1=y_1$). Suppose the result holds for all $K$-vector spaces of dimension smaller than $n$. Then, by induction, the $K$-vector space generated by $\{y_1,\ldots, y_{n-1}\}$ contains elements $\{x_1,\ldots, x_{n-1}\}$ such that each $v(x_i)$ lies in a different $\Gamma_K$-coset. Without loss of generality we may assume 
\[
v(x_{n}) > v(x_{n-1}) +\Gamma_K >\cdots > v(x_{1}) +\Gamma_K.  
\]
If either $v(y_n)>v(x_{n-1})+\Gamma_K$, $v(x_1)>v(y_n)+\Gamma_K$, or 
\[
v(x_{m+1}) > v(y_n) +\Gamma_K > v(x_{m}) +\Gamma_K,   
\]
for some $1\leqslant m<n-1$, then we are done by setting $x_n\coloneqq y_n$. Otherwise, there are $c\in K$ and $1\leqslant m\leqslant n-1$ such that $v(cy_n) = v(x_{m})$. By assumption, let $a\in K$ be such that $cy_n/x_{m}=x+a$ with $v(x)>\Gamma_K$. Therefore, for $b\coloneqq cy_n-ax_{m}$ satisfies $v(b)>v(x_{m})+\Gamma_K$. If $v(b)$ is in a different $\Gamma_K$-coset than every $v(x_i)$ for $i\in \{1,\ldots, n-1\}$, 
we are done by setting $x_n\coloneqq b$. Otherwise, there are $c'\in K$ and $m'>m$ such that $v(c'b)=v(x_{m'})$. Following the same procedure, there is $a'\in K$ such that 
\[
v(c'b-a'x_{m'})> v(x_{m'})+\Gamma_K.   
\]
 
Again, for $b'\coloneqq c'b-a'x_{m'}$, if $v(b')$ lies in a different $\Gamma_K$-coset than every $x_i$ for $i\in\{1,\ldots, n-1\}$, we are done. Otherwise, there must be $c''\in K$ such that $v(c''b')=v(x_{m''})$ for $m''\in\{1,\ldots, n-1\}$ such that either $m''>m'$. Iterating this argument at most $n-m$ times, one finds a $K$-linear combination $x_n\coloneqq a_{n}y_n+\sum_{i=1}^{n-1} a_{i}x_{i}$ such that $v(x_n)$ is in a different $\Gamma_K$-coset than every $v(x_i)$ for $i\in \{1,\ldots, n-1\}$. 
\end{proof}

Let $(K\subseteq L,v)$ be a valued field extension. Let $G$ be the convex hull of $\Gamma_K$ in $\Gamma_L$ and $w$ be the valuation on $L$ obtained by composing $v$ with the canonical quotient map $\Gamma_L\to \Gamma_L/G$. Let us denote $k_K^w$ and $k_L^w$ the residue fields of $(K,w)$ and $(L,w)$. As $w$ is trivial on $K$, $K\cong k_K^w$. 

An element $a\in L$ is \emph{limit over $K$} if the extension $K(a)|K$ is an immediate extension. We let the reader check that if $K$ is a $p$-adically closed valued field and $a$ is limit over $K$, then the type $tp(a/K)$ is not definable. 

 \begin{theorem}[{\cite[Part (a) of the main Theorem]{delon}}]\label{thm:delon} Suppose $(K\prec L,v)$ is a valued field extension of Henselian valued fields of characteristic 0 and let $w$ be as above. If the canonical embedding $k_K^w\to k_L^w$ is an isomorphism, then $K\prec L$ is stably embedded in $\cL_{\Div}$. 
\end{theorem}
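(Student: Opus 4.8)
The plan is to establish stable embeddedness through type-definability and invoke Lemma~\ref{lem:stably_def}: fix a finite tuple $a$ from $L$ and show that $tp(a/K)$ is definable in $\cL_{\Div}$. Before that I would record the consequences of the hypothesis $k_K^w\cong k_L^w$. Since $w$ is trivial on $K$ one has $k_K^w=K$, so the hypothesis says exactly that every element of $\OO_w(L)$ is $w$-congruent to an element of $K$; chasing the three cases $w(y)>0$, $w(y)=0$, $w(y)<0$, this yields that every $y\in L$ can be written $y=c+x$ with $c\in K$ and $|v(x)|>\Gamma_K$. By Lemma~\ref{lem:stpart} the extension $L\mid K$ is therefore $vs$-defectless. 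Moreover, comparing the value group of the induced valuation $\bar v$ on $k_L^w$ (which is the convex hull of $\Gamma_K$) with that of its image of $K$ (which is $\Gamma_K$), the isomorphism forces $\Gamma_K$ to be convex in $\Gamma_L$ and $k_K=k_L$. In particular the coarsening $(L,w)$ is Henselian of residue characteristic $0$ (in mixed characteristic the residue prime lies in $K$, so $w(p)=0$) with residue field exactly $K$.

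Next I would reduce an arbitrary formula to value-group and residue-field data. By the Ax--Kochen--Ershov relative quantifier elimination for Henselian valued fields of residue characteristic $0$, any $\cL_{\Div}$-condition $\varphi(a,c)$ with $c\in K^n$ is equivalent to a Boolean combination of atoms of the form (i) $v(P(a,c))\le v(Q(a,c))$ and (ii) $\res(R(a,c))\in Y$, where $P,Q,R$ are polynomials over $\Z$ and $Y$ is definable in the residue field. It then suffices to show that for each atom the set of $c\in K^n$ satisfying it is $\cL_{\Div}(K)$-definable, since a $\varphi$-definition for $tp(a/K)$ is then assembled by taking the corresponding Boolean combination.

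This is where the separated structure does the work. Fixing $a$, every $P(a,c)$ is a $K$-linear combination of the finitely many monomials in $a$; applying $vs$-defectlessness, and crucially using the refinement in the proof of Lemma~\ref{lem:stpart} together with \cite[Lemma 3.2.2]{prestel2005} to choose a common $K$-valuation basis $e_1,\dots,e_r$ whose values lie in pairwise distinct $\Gamma_K$-cosets, I would write $P(a,c)=\sum_k h_k(c)\,e_k$ and $Q(a,c)=\sum_k h'_k(c)\,e_k$ with $h_k,h'_k\in K[y]$, so that $v(P(a,c))=\min_k\bigl(v(h_k(c))+v(e_k)\bigr)$ and likewise for $Q$. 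A comparison then breaks into finitely many pairwise inequalities $v(h_k(c)/h'_{k'}(c))\le v(e_{k'}/e_k)$; when $k=k'$ the right-hand side is $0$ and the inequality is the $\cL_{\Div}(K)$-condition $v(h_k(c))\le v(h'_k(c))$, while for $k\neq k'$ the distinct-coset choice together with convexity of $\Gamma_K$ makes $v(e_{k'}/e_k)$ either $>\Gamma_K$ or $<\Gamma_K$, so the inequality has a constant truth value. Hence atom (i) is $\cL_{\Div}(K)$-definable in $c$. For atom (ii) the same basis expresses $\res(R(a,c))$ as a $K$-definable function of $c$ into $k_L=k_K$, and since the residue fields coincide the condition $\res(R(a,c))\in Y$ pulls back to an $\cL_{\Div}(K)$-condition. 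Combining the atoms yields the required scheme of definition, and Lemma~\ref{lem:stably_def} completes the argument.

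I expect the main obstacle to be the uniform-in-$c$ step: extracting, for all parameters $c\in K$ simultaneously, the valuation and the residue of $P(a,c)$ from a single valuation basis. The hypothesis enters precisely here, through $vs$-defectlessness (to obtain the basis), through convexity of $\Gamma_K$ (to collapse cross-coset value-group comparisons to constants), and through $k_K=k_L$ (to make the residue-field conditions descend automatically); the relative quantifier elimination is the other essential, if standard, input, and its correct formulation in $\cL_{\Div}$ for the mixed-characteristic case is the point requiring the most care.
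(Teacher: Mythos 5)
The paper offers no proof of Theorem \ref{thm:delon}: it is imported verbatim from Delon's work, so there is no internal argument to compare yours against; I can only assess your proposal on its own terms. Your preliminary analysis is sound and matches what the surrounding text of the paper extracts from the hypothesis: the identification $k_K^w\cong K$, the decomposition $y=c+x$ with $c\in K$ and $|v(x)|>\Gamma_K$, the resulting $vs$-defectlessness via Lemma \ref{lem:stpart}, the convexity of $\Gamma_K$ in $\Gamma_L$ together with $k_K=k_L$, and the valuation-basis computation reducing conditions $v(P(a,c))\leqslant v(Q(a,c))$ to $\cL_{\Div}(K)$-definable conditions on $c$ are all correct.

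The genuine gap is the quantifier-elimination step. You invoke a relative QE ``for Henselian valued fields of residue characteristic $0$'' yielding atoms $v(P)\leqslant v(Q)$ and $\res(R)\in Y$, but the theorem only assumes the \emph{fields} have characteristic $0$: the residue characteristic of $v$ may be $p$, and this is precisely the case the paper needs, since Theorem \ref{thm:delon} is applied to $\PCF_d$. In mixed characteristic no such elimination for $v$ holds in $\cL_{\Div}$ (the set of squares of $\Q_2$ is not a Boolean combination of such atoms over one variable, as it is not a finite union of Swiss cheeses), so the reduction fails at the outset; and even in equicharacteristic $0$ the atoms must allow arbitrary value-group formulas in tuples $v(P_1),\dots,v(P_m)$ and residue-field formulas in residues of ratios (or RV-conditions), not just pairwise comparisons. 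The correct move --- and the actual content of Delon's proof --- is to run the relative elimination for the coarsening $w$, which is equicharacteristic $0$ with residue field canonically isomorphic to $K$ itself, and then translate $\cL_{\Div}$-formulas (which speak about $v$) through the decomposition of $v$ as the composite of $w$ with the induced valuation on $k_L^w\cong K$, checking along the way that the induced structures on $k_L^w$ and on $\Gamma_L/\Gamma_K$ return $K$-definable data when evaluated on $w$-residues and $w$-values of polynomials in $a$ over $K$. You correctly observe that $(L,w)$ has residue characteristic $0$, but you never perform this translation, and your closing acknowledgement that the mixed-characteristic formulation ``requires the most care'' flags the difficulty without discharging it.
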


We are ready to show Theorem \ref{thm:valued-charac} for $\PCF_d$:

\begin{proof}[Proof of Theorem \ref{thm:valued-charac} for $\PCF_d$:] 
$(1)\Rightarrow (2)$: That $\Gamma_K\prec \Gamma_L$ is stably embedded follows again by Remark \ref{rem:stablyemb}. It remains to show that the extension is $vs$-defectless. By Lemma \ref{lem:stpart}, it suffices to show that every element $y\in L$ is of the form $x+a$ for $a\in K$ and $x\in L$ such that $|v(x)|>\Gamma_K$. Every element in $y\in K$ is of such form taking $x=0$, so we may suppose $y\in L\setminus K$. If $|v(y)|>\Gamma_K$ take $a=0$. Otherwise, since by Corollary \ref{cor:pres} $\Gamma_L$ is an end extension of $\Gamma_K$, we must have $v(y)\in v(K)$. Suppose there is no $a\in K$ such that $|v(y-a)|>\Gamma_K$. Thus, for every $a\in K$, $v(y-a)\in \Gamma_K$. But this implies that $y$ is a limit over $K$, which contradicts that $K$ is stably embedded in $L$. This shows the extension is $vs$-defectless.  

\

$(2)\Rightarrow (1)$: Since the pair is $vs$-defectless, there are no limit points in $L$ over $K$. Moreover, since $\Gamma_K\prec \Gamma_L$ is stably embedded, by Corollary \ref{cor:pres}, it is an end extension of $\Gamma_K$. The same argument as in the previous implication shows that every element $y\in L$ is of the form $x+a$ for $a\in K$ and $x\in L$ such that $|v(x)|>\Gamma_K$. In particular, the convex hull of $\Gamma_K$ in $\Gamma_L$ is $\Gamma_K$. Let us show that $k_L^w$ is isomorphic to $K$. For all $y\in L\setminus K$ such that $w(y)=0$, there is a unique $a\in K$ such that $v(y-a)>\Gamma_K$. Therefore $\res_w(y)=a$, which shows that $k_L^w$ is in bijection with $K$. The result now follows from \ref{thm:delon}. 
\end{proof}

We summarize the above results in the following theorem. 

\begin{theorem}\label{thm:definabletypes} Let $T$ be one of the following theories 
\begin{enumerate}[$(i)$]
\item a complete o-minimal theory;
\item $\CODF$, Presburger arithmetic, the theory of a finite extension of $\Q_p$ ($\PCF_d$), a completion of $\ACVF$,  $\RCVF$. 
\end{enumerate}
Then, the class $\SE(T)$ is an elementary class in $\cL_P$. 
\end{theorem}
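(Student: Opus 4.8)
The plan is to observe that this theorem is a compilation of results already established for each listed theory individually, so the proof consists simply of collecting the relevant corollaries case by case. All of the genuine work has been front-loaded into proving, for each setting, a characterization either of definable types or of stably embedded pairs directly, together with the observation that each such characterization is expressible in the language of pairs $\cL_P$. By Lemma \ref{lem:stably_def}, stable embeddedness of a pair is equivalent to definability of $tp(a/M)$ for every tuple $a$ in the large model, so characterizations of definable types translate directly into characterizations of the class $\SE(T)$.

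For case $(i)$, a complete o-minimal theory, the statement is exactly Corollary \ref{cor:omin_elementary}: the Marker-Steinhorn characterization (Theorem \ref{thm:MS}) identifies definable types with those $p$ such that $M$ is Dedekind complete in $M(a)$ for each realization $a$, and Dedekind completeness is expressible in $\cL_P$. For the theories in case $(ii)$ I would treat them in their natural groups. For $\CODF$ the result is Corollary \ref{cor:CODF_elementary}, obtained from Brouette's characterization in terms of Dedekind completeness in an appropriate real closure. For Presburger arithmetic it is Corollary \ref{cor:pres}, derived from the Conant-Vojdani characterization (Theorem \ref{thm:pres1}) identifying definable types with end-extensions, which is again elementary in $\cL_P$.

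Finally, for a completion of $\ACVF$, for $\RCVF$, and for $\PCF_d$, the statement is Corollary \ref{cor:SE-valued}. This is the substantive case: it rests on Theorem \ref{thm:valued-charac}, which characterizes stably embedded pairs of valued fields by $vs$-defectlessness of the extension together with stable embeddedness of the value group and residue field extensions, and on the elementarity of each of these conditions in $\cL_P$. Being $vs$-defectless is directly an elementary property of the pair; the value-group condition reduces to the o-minimal case (for $\ACVF$ and $\RCVF$, via $\DOAG$) or to the Presburger case (for $\PCF_d$) through Remark \ref{rem:stablyemb}; and the residue-field condition, which only plays a role for $\RCVF$, reduces to the elementarity already established for real closed fields.

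The remaining work, and the only genuine obstacle, has already been discharged in the preceding sections: the hard inputs are the new instances of Theorem \ref{thm:valued-charac} for $\RCVF$ and $\PCF_d$ proved in Sections \ref{sec:appendix_RCVF} and \ref{sec:appendix_PCF}, together with the external characterizations of definable types invoked for the o-minimal, $\CODF$, and Presburger cases. Granting all of these, the present theorem follows immediately by assembling the cited corollaries, and no further argument is required.
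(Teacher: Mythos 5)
Your proposal is correct and matches the paper exactly: the paper introduces Theorem \ref{thm:definabletypes} with the phrase ``We summarize the above results in the following theorem'' and offers no further argument, so the intended proof is precisely the assembly of Corollaries \ref{cor:omin_elementary}, \ref{cor:CODF_elementary}, \ref{cor:pres} and \ref{cor:SE-valued} that you describe. Nothing further is needed.
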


\begin{question} Is there a natural characterization of the class of complete NIP theories $T$ for which $\SE(T)$ is $\cL_P$-elementary?
\end{question} 

Note that there are NIP $\cL$-theories $T$ for which the class $\SE(T)$ is not $\cL_P$-elementary. The following example is due to L. Newelski. 

\begin{example}\label{example:newelski}
Let $(a_i)_{i<\omega}$ be a sequence of irrational numbers $a_i$ such that the limit $\lim_{i\to \infty} a_i=c$ is again irrational an bigger than every $a_i$. Consider the structure $M=(\Q,<, (P_{a_i})_{i<\omega})$ where $P_{a_i}$ is a unary predicate interpreted as the cut $\{x\in \Q \mid x<a_i\}$. The theory $T=\Th(M)$ is NIP as any NIP theory extended by externally definable sets is NIP (see \cite[Section 3.3]{simon}). Now let $N=M\cup \{b_i\mid i<\omega\}$ where $b_i$ realizes the definable type over $M$ determined by the set of formulas 
\[
\{c<x \mid M\models P_{a_i}(c)\} \cup \{x<c\mid M\models \neg P_{a_i}(c) \}. 
\]
Then, $(N,M)$ is a stably embedded pair. Let $(N^*,M^*)$ be an ultrapower of $(N,M)$ over a non-principal ultrafilter $F$ over $\omega$. Let $b$ be the class of the sequence $(b_i)_{i<\omega}$ modulo $F$. We let the reader check that the set $X=\{x\in M^* \mid x<b\}$ is not definable in $M^*$, hence the pair $(N^*,M^*)$ is not stably embedded. This shows $\SE(T)$ is not $\cL_P$-elementary. 
\end{example}

\section{Uniform definability via classes of pairs}\label{sec:UD}

The following theorem provides an abstract criterion for a theory $T$ to have uniform definability of types. 

\begin{theorem}\label{thm:UD2} Suppose there is an $\cL_P$-elementary class $\mathcal{C}$ such that  
\begin{enumerate}[$(i)$]
\item if $(N,M)\in \mathcal{C}$, then $M\models T$;
\item if $(N,M)\in \mathcal{C}$, then $N$ is an $\cL$-substructure of an $\cL$-elementary extension $N'$ of $M$;
\item if $(N,M)\in \mathcal{C}$, and $a$ is a finite tuple in $N$, then $tp(a/M)$ is definable (where $tp(a/M)$ is defined with respect to $N'$); 
\item for every small model $M\models T$ and every definable type $p\in \D{x}(M)$, there is a pair $(N,M)\in \mathcal{C}$ and $a\in \sS_x(N)$ such that $p=tp(a/M)$.
\end{enumerate}
Then $T$ has uniform definability of types. In particular, definable types are pro-definable in $T$ in any reduct of $\cL^{\eq}$ in which $T$ has elimination of imaginaries.   
\end{theorem}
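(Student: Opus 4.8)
The plan is to fix a partitioned formula $\varphi(x;y)$ and produce, via a compactness argument inside $\cL_P$, finitely many $\cL$-formulas that cover all $\varphi$-definitions of all definable types over all models of $T$; Lemma~\ref{lem:UD_finite} then upgrades this to a single uniform definition, and the final ``in particular'' clause follows from Proposition~\ref{prop:pro_def_eq}. First I would introduce a fresh tuple of constants $\bar a$ of sort $\sS_x$ living in the big model of a pair, and work in the $\cL_P$-theory $T_{\mathcal{C}}$ axiomatizing the elementary class $\mathcal{C}$ together with $\{\bar a\in\sS_x\}$. For each $\cL$-formula $\chi(y,z)$ consider the $\cL_P(\bar a)$-sentence
\[
\sigma_\chi \;\equiv\; \exists \bar z\,\Bigl(P(\bar z)\wedge \forall y\,\bigl(P(y)\to(\varphi(\bar a,y)\leftrightarrow \chi(y,\bar z))\bigr)\Bigr),
\]
which asserts that some instance of $\chi$ with a parameter from the small model is a $\varphi$-definition for $\tp(\bar a/P)$.

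By $(iii)$, in every model $(N,M)\models T_{\mathcal{C}}$ the type $\tp(\bar a/M)$ is definable, hence admits some $\varphi$-definition over $M$; so every model of $T_{\mathcal{C}}\cup\{\bar a\in\sS_x\}$ satisfies $\sigma_\chi$ for at least one $\chi$. Consequently $T_{\mathcal{C}}\cup\{\bar a\in\sS_x\}\cup\{\neg\sigma_\chi : \chi\in\cL\}$ is inconsistent, and compactness yields finitely many $\chi_1,\dots,\chi_n$ with $T_{\mathcal{C}}\cup\{\bar a\in\sS_x\}\models \sigma_{\chi_1}\vee\cdots\vee\sigma_{\chi_n}$. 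Thus for every $(N,M)\in\mathcal{C}$ and every $\bar a\in\sS_x(N)$, some $\chi_i(y,\bar m)$ with $\bar m\in M$ is a $\varphi$-definition for $\tp(\bar a/M)$. Feeding in $(iv)$ --- every definable $p\in\D{x}(M)$ over every $M\models T$ is realized by such a pair, whose base is a model of $T$ by $(i)$ --- the formulas $\chi_1,\dots,\chi_n$ satisfy the hypothesis of Lemma~\ref{lem:UD_finite}, which delivers a uniform definition of $\varphi$.

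The main obstacle is that $\sigma_\chi$ as written is only literally correct when the pair is an honest elementary extension $M\prec N$ (the case of stably embedded pairs, which is the principal application via Lemma~\ref{lem:stably_def}): there $\tp(\bar a/M)$ is computed in $N$ itself, so the occurrence of $\varphi(\bar a,y)$ in $\sigma_\chi$ is genuinely first order in $(N,M,\bar a)$. Under the weaker hypotheses $(ii)$--$(iii)$ the type $\tp(\bar a/M)$ is evaluated in the elementary extension $N'\supseteq N$ rather than in $N$, and $\varphi(\bar a,y)$ need not be recoverable from the pair $(N,M)$ alone, so $\sigma_\chi$ is not an $\cL_P$-sentence. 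I would resolve this by recasting the argument contrapositively with ultraproducts: if no finite list works, enumerate the candidate formulas $\chi_1,\chi_2,\dots$ and pick for each $n$ a model $M_n\models T$ and $p_n=\tp(a_n/M_n)$ realized in some $(N_n,M_n)\in\mathcal{C}$ defeating $\chi_1,\dots,\chi_n$, and form $(N^*,M^*)=\prod_n (N_n,M_n)/F$ for a nonprincipal ultrafilter $F$ on $\omega$. Since $\mathcal{C}$ is elementary this pair again lies in $\mathcal{C}$, so by $(iii)$ the type of $a^*=[a_n]$ over $M^*$ is definable, say with $\varphi$-definition $\chi_{j_0}$; crucially, the ultraproduct $\prod_n N_n'/F$ of the extensions supplied by $(ii)$ is an honest elementary extension of $M^*$ in which this type is computed, so ``$\chi_{j_0}$ is a $\varphi$-definition for $\tp(a^*/M^*)$'' is a genuine first-order statement in the elementary pair $\bigl(\prod_n N_n'/F,\,M^*\bigr)$, to which {\L}o\'s's theorem applies and transfers a working $\chi_{j_0}$ back to $F$-many indices $n\geqslant j_0$, contradicting the choice of $p_n$. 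It is exactly here that elementarity of $\mathcal{C}$ (to keep the ultraproduct inside $\mathcal{C}$ for $(iii)$) combines with condition $(ii)$ (to make the $\varphi$-definition property expressible over an honest pair).
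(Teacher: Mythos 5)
Your proposal is correct, and your first compactness argument is, up to taking the contrapositive, exactly the paper's proof: the paper fixes $\varphi(x;y)$, assumes no formula from an enumeration $(\psi_i(y,z_i))_{i\in I}$ gives a uniform definition, uses Lemma~\ref{lem:UD_finite} to produce for each finite $J\subseteq I$ a model $M_J$ and a type $q_J\in\D{x}(M_J)$ defeating all $\psi_i$ with $i\in J$, and then shows via condition $(iv)$ that the $\cL_P$-type $T_{\mathcal{C}}\cup\{\theta_i(x):i\in I\}$ is finitely satisfiable, where $\theta_i(x)$ is $(\forall z_i\in P)(\exists y\in P)\neg(\varphi(x,y)\leftrightarrow\psi_i(y,z_i))$; a realization of this type then contradicts $(iii)$. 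Where you genuinely diverge is in the second half: you correctly observe that $\theta_i$ (your $\sigma_\chi$) evaluates $\varphi(a,y)$ in $N$, whereas conditions $(ii)$--$(iii)$ compute $\tp(a/M)$ in the ambient elementary extension $N'\supseteq N$, and these need not agree when $N$ is merely an $\cL$-substructure. The paper's proof passes over this point silently; it is harmless for the main application $\mathcal{C}=\SE(T)$, where $N'=N$, but it is a live issue for the $\CODF$-style classes the theorem is designed to cover. Your ultraproduct repair is sound: $\prod_n N_n'/F$ is an $\cL$-elementary extension of $M^*$ containing $N^*=\prod_n N_n/F$, hence a legitimate witness for $(ii)$, so $(iii)$ applies, and the assertion that $\chi_{j_0}(y,c^*)$ is a $\varphi$-definition of $\tp(a^*/M^*)$ is first order in the elementary pair $(\prod_n N_n'/F,\,M^*)$, to which {\L}o\'s's theorem applies and pushes the definition down to $F$-many coordinates. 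So your argument buys a proof that is actually robust under the weaker hypotheses $(ii)$--$(iii)$, at the cost of replacing one application of compactness by an explicit ultraproduct.

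Two small caveats. First, indexing by $n\in\omega$ presupposes countably many candidate formulas; for an uncountable language you should index by finite subsets $J$ of the set of candidates and take a fine ultrafilter on those, exactly mirroring the paper's use of finite $J\subseteq I$. Second, you implicitly read condition $(iii)$ as holding for \emph{any} elementary extension $N'$ witnessing $(ii)$, so that it applies to the particular witness $\prod_n N_n'/F$; this is the natural reading (the statement is ambiguous on this point) and is satisfied in all of the paper's applications, but it is worth saying explicitly.
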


\begin{proof} 
Fix a partitioned $\cL$-formula $\varphi(x;y)$. Let $(\psi_i(y,z_i))_{i\in I}$ be an enumeration of all $\cL$-formulas having $y$ among their free variables. Suppose for a contradiction that no formula $\psi_i(y,z_i)$ provides a uniform definition for $\varphi$. This implies, by Lemma \ref{lem:UD_finite}, that for every finite subset $J\subseteq I$ there are a model $M_J$ of $T$ and a type $q_{J}(x)\in \D{x}(M_J)$ such that no formula $\psi_i$ is a $\varphi$-definition for $q_J$. Consider for every $i\in I$ the $\cL_P$-formula $\theta_i(x)$ 
\[
(\forall z_i\in P)(\exists y\in P)(\neg(\varphi(x,y)\leftrightarrow\psi_i(y,z_i))).
\]
Let $\Sigma(x)\coloneqq\{\theta_i(x)\mid i\in I\}\cup T_\mathcal{C}$, where $T_\mathcal{C}$ is an $\cL_P$-axiomatization of $\mathcal{C}$. Let us show that $\Sigma(x)$ is consistent. Let $\Sigma_0$ be a finite subset of $\Sigma$ and let $J\coloneqq \{i\in I\mid \theta_i(x)\in \Sigma_0\}$. By assumption, there is some $q=q_J(x)\in \D{x}(M_J)$ such that no $\psi_i$ is a $\varphi$-definition for $q$. By condition $(iv)$, let $(N,M_J)$ be an element of $\mathcal{C}$ and $a\in \sS_x(N)$ be such that $q=tp(a/M)$. By the choice of $a$, we have that $\theta_i(a)$ holds for all $i\in J$. This shows that $\Sigma_0$ is consistent. Thus, by compactness, $\Sigma$ is consistent. 

Let $(N', M')$ be an element in $\mathcal{C}$ and $a\in N'$ be a realization of $\Sigma$. By the definition of $\Sigma$, the type $tp(a/M')$ is not definable, which contradicts condition $(iii)$. 
\end{proof}

\begin{corollary}\label{thm:UD} Let $T$ be such that
\begin{enumerate}[$(i)$]
\item $\SE(T)$ is $\cL_P$-elementary; 
\item for every small model $M\models T$ and every definable type $p\in \D{x}(M)$, there is a stably embedded $(N,M)$ such that $P$ is realized in $N$.
\end{enumerate}
Then $T$ has uniform definability of types. In particular, definable types are pro-definable in $T$ in any reduct of $\cL^{\eq}$ in which $T$ has elimination of imaginaries.   
\end{corollary}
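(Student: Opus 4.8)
The plan is to deduce this directly from the abstract criterion of Theorem \ref{thm:UD2}, taking the $\cL_P$-elementary class $\mathcal{C}$ there to be $\SE(T)$, the class of stably embedded pairs of models of $T$. Hypothesis (i) of the corollary is exactly the $\cL_P$-elementarity of $\mathcal{C}$ demanded by Theorem \ref{thm:UD2}, so all that remains is to check conditions (i)--(iv) of that theorem for $\mathcal{C} = \SE(T)$.

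Conditions (i)--(iii) are formal. By definition of $\SE(T)$, every pair $(N,M) \in \SE(T)$ satisfies $M \prec N \models T$, whence $M \models T$; this is (i). Because the extension $M \prec N$ is elementary, we may take $N' = N$ in condition (ii), so that $N$ is trivially an $\cL$-substructure of the $\cL$-elementary extension $N' = N$ of $M$. With this choice, condition (iii)---that $\tp(a/M)$ be definable for every finite tuple $a$ in $N$---is precisely Lemma \ref{lem:stably_def} applied to the stably embedded pair $(N,M)$.

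The only condition carrying genuine input is (iv), and it is supplied by hypothesis (ii) of the corollary: given a small $M \models T$ and a definable type $p \in \D{x}(M)$, there is a stably embedded pair $(N,M)$ realizing $p$, i.e. a tuple $a \in \sS_x(N)$ with $p = \tp(a/M)$, where we identify the $M$-definable global type $p$ with its restriction to $M$ as in Remark \ref{rem:not-clash}. All four hypotheses of Theorem \ref{thm:UD2} therefore hold, and we conclude that $T$ has uniform definability of types; the ``in particular'' clause is then immediate from Proposition \ref{prop:pro_def_eq}. Since the real content was already extracted in Theorem \ref{thm:UD2}, there is no serious obstacle here---the single point to handle with care is the identification in (iv) between the global definable type $p$ and the type over $M$ witnessed by the realization $a$, which is exactly what Remark \ref{rem:not-clash} licenses.
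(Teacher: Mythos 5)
Your proof is correct and follows exactly the paper's own route: apply Theorem \ref{thm:UD2} with $\mathcal{C}=\SE(T)$, observe that conditions (i)--(iii) hold trivially for stably embedded pairs (via Lemma \ref{lem:stably_def}), and note that condition (iv) is precisely hypothesis (ii). The extra care you take with the identification of the global definable type and its restriction to $M$ is a reasonable elaboration of what the paper leaves implicit.
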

\begin{proof} This follows directly by Theorem \ref{thm:UD2} by taking $\mathcal{C}=\SE(T)$. Indeed, note that condition $(i)-(iii)$ of \ref{thm:UD2} are trivially satisfied, and condition $(iv)$ corresponds to the present assumption $(ii)$.  
\end{proof}

\begin{theorem}\label{thm:UDlist} The following theories have uniform definability of types (in one of their natural languages): 
\begin{enumerate}
    \item Any complete stable theory; 
    \item Any complete o-minimal theory; 
    \item Presburger arithmetic; 
    \item The theory of a finite extension of $\Q_p$;
    \item $\RCVF$;
    \item any completion of $\ACVF$;
    \item $\CODF$. 
\end{enumerate}
\end{theorem}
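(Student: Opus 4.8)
The plan is to prove Theorem \ref{thm:UDlist} by invoking the abstract criterion of Corollary \ref{thm:UD} for each theory individually, since nearly all the required work has been assembled in the preceding sections. Recall that Corollary \ref{thm:UD} reduces uniform definability of types to two conditions: that $\SE(T)$ is $\cL_P$-elementary, and that every definable type over a small model is realized in some stably embedded pair. The first condition is exactly the content of Theorem \ref{thm:definabletypes} for items (2)--(7), and is trivial for stable theories, where $\SE(T)$ coincides with the class of elementary pairs and is thus elementary by definition. So the first condition will be dispensed with by citation in every case. The bulk of the argument therefore concerns verifying condition (ii) of Corollary \ref{thm:UD}, the realization condition, for each theory.

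First I would handle the stable case (1) separately and directly, since it does not require the general machinery: in a stable theory every type over a model is definable, and given a definable (equivalently, any) type $p \in \D{x}(M)$, one simply takes a realization $a$ in some elementary extension $N \succ M$ and observes that $(N,M)$ is automatically stably embedded, because by Lemma \ref{lem:stably_def} stable embeddedness of $M \prec N$ is equivalent to every $\tp(b/M)$ for $b \in N$ being definable, which holds by stability. Thus condition (ii) is immediate, and uniform definability follows. For each of the remaining theories, the realization condition asks: given a definable type $p \in \D{x}(M)$ over a small model, produce a \emph{stably embedded} pair $(N,M)$ in which $p$ is realized. The natural move is to take any realization $a \models p|N$ in a suitable elementary extension $N \succ M$ and check that the resulting pair $(N,M)$ — or the substructure it generates — is stably embedded, which by Lemma \ref{lem:stably_def} amounts to showing that $\tp(b/M)$ is definable for every $b \in N$.

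The heart of the matter is that this last verification is precisely where the characterizations of definable types collected in Section \ref{sec:SEP} do the work. For the o-minimal case (2) and for $\CODF$ (7), definability of $p$ means, by Theorem \ref{thm:MS} (resp. Brouette's theorem), that $M$ is Dedekind complete in the prime model $M(a)$ (resp. in the appropriate real closure); one then takes $N$ to be that prime (real closure) model and uses Lemma \ref{lem:general_def_types} to propagate definability to every element of $N = \acl(Ma) \cap \dots$, so that $(N,M)$ is stably embedded. For Presburger (3), definability of $p$ gives by Theorem \ref{thm:pres1} that $M \prec M(a)$ is an end-extension, and Corollary \ref{cor:pres} then identifies such pairs as exactly the stably embedded ones, so $(M(a),M)$ is the desired pair. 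For the valued-field cases (4)--(6), I would take $N$ to be generated by a realization $a$ together with $\acl(Ma)$ and verify the conditions of Theorem \ref{thm:valued-charac}(2): that $N|M$ is $vs$-defectless and that the value group and residue field extensions are stably embedded. Here definability of $p$ controls the value group and residue field extensions (via the characterizations for $\DOAG$ and Presburger applied to $\Gamma$, and the stability of the residue field in $\ACVF$ / finiteness in $\PCF_d$), while $vs$-defectlessness follows from the absence of limit points that definability guarantees — this is exactly the reasoning already deployed in the proof of Theorem \ref{thm:valued-charac}.

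The step I expect to be the main obstacle, and the one requiring genuine care rather than citation, is the realization condition for the valued-field theories: unlike the o-minimal and Presburger settings, where the characterization of definable types hands one a canonical pair directly, in $\ACVF$, $\RCVF$ and $\PCF_d$ one must simultaneously control three features — $vs$-defectlessness and stable embeddedness of both $\Gamma$ and $k$ — and confirm that a realization of a definable type really does yield all three. The subtlety is that one has to choose $N$ carefully (not merely an arbitrary realization, but one whose definable and algebraic closure over $Ma$ is closed under the relevant operations so that Theorem \ref{thm:valued-charac} applies) and argue that definability of $p$ forbids the creation of limit points and of new non-definable behaviour in the value group or residue field. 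I anticipate that for these cases the cleanest route is to cite the realization results already implicit in the stably-embedded-pair analysis of Section \ref{sec:SEP}, reducing the verification of condition (ii) to facts established there, rather than re-deriving the defectlessness arguments; the o-minimal, Presburger, $\CODF$ and stable cases should then follow essentially by unwinding the respective definability characterizations.
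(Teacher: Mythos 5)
Your overall strategy (Corollary \ref{thm:UD} with condition $(i)$ supplied by Theorem \ref{thm:definabletypes}, plus a case-by-case verification of the realization condition $(ii)$) is exactly the paper's, and your treatment of the stable, o-minimal and Presburger cases is correct. But there are two genuine problems. First, for $\CODF$ you propose taking $N$ to be the real closure of the ordered differential field generated by $M\cup\{a\}$ and propagating definability via Lemma \ref{lem:general_def_types}. The obstruction is that this $N$ is a real closed field but \emph{not in general a model of} $\CODF$ (it need not be existentially closed as an ordered differential field), so the pair $(N,M)$ does not lie in $\SE(\CODF)$ and condition $(ii)$ of Corollary \ref{thm:UD} is not verified. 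The paper has to work around this: it takes a sufficiently large real closed field $N'$ containing $a$ with $(N',M)$ stably embedded as real closed fields, invokes Kaplan's result (\cite[Proposition 4.11]{for-kaplan}) to extend the derivation of $M$ to $N'$ so that $(N',\delta)\models\CODF$, and then uses Brouette's characterization to conclude that the resulting pair of models of $\CODF$ is stably embedded in the differential language (alternatively, one applies the more general Theorem \ref{thm:UD2} to a hand-crafted class of pairs). This extra idea is missing from your proposal and cannot be replaced by Lemma \ref{lem:general_def_types} alone.

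Second, for the valued-field cases (4)--(6) your route is both more complicated than necessary and, as sketched, circular. You propose to verify condition (2) of Theorem \ref{thm:valued-charac} ($vs$-defectlessness plus stably embedded $\Gamma$ and $k$) for the extension generated by a realization. But the paper never establishes, independently of stable embeddedness, that a realization of a definable type yields a $vs$-defectless extension; the implication $(1)\Rightarrow(2)$ of Theorem \ref{thm:valued-charac} presupposes stable embeddedness, and the ``absence of limit points'' argument you allude to is specific to the $\PCF_d$ proof and does not give $vs$-defectlessness for $\ACVF$ or $\RCVF$. The correct (and much shorter) argument is the one the paper uses: take $N\coloneqq\acl(Ma)$, observe that $N$ is a model of $T$, and note that every finite tuple $b$ from $N$ lies in $\acl(Ma)$, so $\tp(b/M)$ is definable by Lemma \ref{lem:general_def_types}; hence $(N,M)$ is stably embedded by Lemma \ref{lem:stably_def}. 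Theorem \ref{thm:valued-charac} is needed only for condition $(i)$ (elementarity of $\SE(T)$), not for condition $(ii)$.
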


\begin{proof} (1) This is a well-known result (see B. Poizat's paper \cite{poizat}). Let us give two short different proofs. By \cite[Proposition 3.2]{pillay-hrushovski2011}, generically stable types are uniformly definable in any NIP theory. This implies the result since in a stable theory all types are generically stable. Alternatively, use Corollary \ref{thm:UD}: condition $(i)$ is trivially satisfied and for condition $(ii)$ take $N=\cU$.  

Let $T$ be one of the theories form (2)-(6). We apply Corollary \ref{thm:UD}. Condition $(i)$ is granted by Theorem \ref{thm:definabletypes}. For condition (ii), let $M$ be a model of $T$, $p\in \D{x}(M)$ and $a\in \sS_x(\cU)$ be a realization of $p$. If $T$ is o-minimal, by Marker-Stenhorn's theorem (Theorem \ref{thm:MS}), we can take $N\coloneqq M(a)$ since $(M(a),M)$ is already stably embedded. For $T$ a theory from (3)-(6) we take $N\coloneqq \acl(Ma)$. Note that in these cases $N$ is indeed a model of $T$ and the extension $(N,M)$ is stably embedded by Lemma \ref{lem:general_def_types}. This completes the result for $(2)-(6)$. 


For $\CODF$, the structure $N\coloneqq\acl(Ma)$ is a real closed field extension of $M$ but it is not necessarily a model of $\CODF$. To ensure condition $(ii)$ we can apply the following idea of E. Kaplan. Let $N'$ be a sufficiently large real closed field containing $a$ and such that the extension $(N',M)$ is a stably embedded extension of real closed fields. By \cite[Proposition 4.11]{for-kaplan}, there is a derivation $\delta$ on $N'$ extending the derivation on $M$ such that $(N',\delta)$ is a model of $\CODF$. By \cite[Proposition 3.6]{brouette}, the extension of models of $\CODF$ $(N',M)$ is stably embedded (in the language of $\CODF$).

Alternatively, if $\cL$ denotes the language of ordered rings, $\cL_\delta$ its extension by the derivation, and $\cL_{\delta,P}$ the extension of $\cL_\delta$ by a new unary predicate $P$, one can apply Theorem \ref{thm:UD2} to the following $\cL_{\delta,P}$-elementary class of pairs $(N,M)$: 
\begin{enumerate}
    \item $M\models \CODF$;
    \item $(N,M)$ is an $\cL$-stably embedded pair of real closed fields;
    \item $N$ is an $\cL_\delta$-substructure of an $\cL_\delta$-elementary extension of $M$;
\end{enumerate}
We leave the details of this second approach to the reader.  
\end{proof}

\begin{theorem}\label{thm:prodef} Let $T$ be an $\cL$-theory listed in Theorem \ref{thm:UD} and $\cL'$ be any reduct of $\cL^{\eq}$ in which $T$ has elimination of imaginaries. Then, definable types in $T$ are pro-definable in $\cL'$.  
\end{theorem}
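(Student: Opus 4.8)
The plan is to derive Theorem~\ref{thm:prodef} as an immediate consequence of the machinery already assembled in the excerpt, chaining together three results: Theorem~\ref{thm:UDlist} (uniform definability of types for each listed theory), Proposition~\ref{prop:pro_def_eq} (uniform definability implies pro-definability in $\cL^{\eq}$), and the hypothesis that $\cL'$ is a reduct of $\cL^{\eq}$ in which $T$ eliminates imaginaries. First I would fix one of the theories $T$ from the list and invoke Theorem~\ref{thm:UDlist} to conclude that $T$ has uniform definability of types in its natural language $\cL$; note that the theories named in Theorem~\ref{thm:UD} are exactly items (2)--(7) of Theorem~\ref{thm:UDlist} (the stable case (1) is not required here), so the hypotheses match and no extra work is needed to establish uniform definability.

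Next I would apply Proposition~\ref{prop:pro_def_eq}: since $T$ has uniform definability of types, definable types in $T$ are pro-definable in $\cL^{\eq}$. The content of that proposition is precisely that, for each model $M$, the injective map $\tau$ sending a definable type to its tuple of uniform-definition codes realizes $\D{X}(M)$ as a $*$-definable, hence pro-definable, subset of a small product of imaginary sorts, with all the functoriality in the commuting diagram of Section~\ref{sec:type_pro}. The final step is the descent along the reduct: the ``In particular'' clause of Proposition~\ref{prop:pro_def_eq} states that elimination of imaginaries transports pro-definability from $\cL^{\eq}$ down to the theory itself. Since by hypothesis $\cL'$ is a reduct of $\cL^{\eq}$ in which $T$ admits elimination of imaginaries, every imaginary sort used in the codomain of $\tau$ is interdefinable with a tuple from the real sorts of $\cL'$, so the pro-definable structure produced in $\cL^{\eq}$ is already pro-$\cL'$-definable after replacing each imaginary coordinate by its $\cL'$-code.

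I do not expect a genuine obstacle: the theorem is essentially a bookkeeping corollary, and the only point requiring mild care is matching the lists of theories and confirming that each theory in Theorem~\ref{thm:UD} indeed admits elimination of imaginaries in the reduct $\cL'$ one has in mind (for instance $\cL^{\eq}$ itself always works, and the geometric language works for $\ACVF$ and $\RCVF$). If one wanted to be fully self-contained, the mildly delicate step would be verifying that the reduction of each imaginary coordinate of $\tau$ to a real $\cL'$-tuple is uniform across the product indexed by $\Phi$, so that the resulting object is still a genuine pro-$\cL'$-definable set rather than merely a pro-$\cL^{\eq}$-definable one; but this is exactly what elimination of imaginaries guarantees formula-by-formula, and the small-product structure is preserved. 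Thus the proof reduces to: fix $T$, apply Theorem~\ref{thm:UDlist} to get uniform definability, apply Proposition~\ref{prop:pro_def_eq} to get pro-definability in $\cL^{\eq}$, and use elimination of imaginaries in $\cL'$ to conclude.
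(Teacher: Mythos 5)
Your proposal is correct and follows exactly the paper's own (one-line) argument: invoke the uniform definability result for the listed theories and then apply Proposition~\ref{prop:pro_def_eq}, whose ``in particular'' clause handles the descent to the reduct $\cL'$ via elimination of imaginaries. The extra care you take in spelling out why the imaginary codes in the codomain of $\tau$ can be replaced uniformly by $\cL'$-tuples is a reasonable elaboration of what the paper leaves implicit, but it is not a different route.
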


\begin{proof} This follows directly by Theorem \ref{thm:UD} and Proposition \ref{prop:pro_def_eq}. 
\end{proof}

We expect similar results hold for theories of (tame) valued fields with generic derivations as defined in \cite{cubi-point} and of o-minimal fields with a generic derivation as defined in \cite{for-kaplan}.

\begin{corollary} Let $T$ be either an o-minimal theory, a completion of $\ACVF$ or $\RCVF$. The bounded completion of a definable set is pro-definable in $\cL^{\eq}$. In the two latter cases, the orthogonal completion of a definable set is pro-definable in $\cL^{\eq}$
\end{corollary}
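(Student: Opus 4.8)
The plan is to deduce this from the pro-definability of the ambient space $\D{X}(M)$ together with Corollary \ref{cor:pro_def_eq}. Since each of the listed theories has uniform definability of types (Theorem \ref{thm:UDlist}), the space $\D{X}(M)$ is pro-definable and, by Corollary \ref{cor:pro_def_eq}, every $\ast$-definable subset of it is again pro-definable in $\cL^{\eq}$. Thus the whole content is to show that the bounded completion $\widetilde{X}(M)$ and the orthogonal completion $\widehat{X}(M)$ are cut out of $\D{X}(M)$ by a small conjunction of $\cL^{\eq}(M)$-conditions on the defining scheme, i.e. that they are $\ast$-definable. The functoriality needed to upgrade ``$\ast$-definable subset'' to ``pro-definable subfunctor'' is already recorded in Remark \ref{rem:bounded}(4), so I would only address $\ast$-definability.

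First I would reduce each condition to a statement about pushforwards to $\Gamma$ (resp. to the home sort, in the o-minimal case). For orthogonality, note that since $\Gamma$ is linearly ordered, any $\gamma\in\Gamma(\acl(Ma))$ has a finite $\mathrm{Aut}(\cU/Ma)$-orbit which is order-preserved, hence fixed pointwise; therefore $\Gamma(\acl(Ma))=\Gamma(\dcl(Ma))$, and orthogonality of $p=tp(a/M)$ is equivalent to: for every $M$-definable $g\colon X\to\Gamma$ one has $g(a)\in\Gamma(M)$, i.e. the pushforward $g^\Def(p)$ is a realized type. Boundedness is handled identically: by the same order argument it is equivalent to $g(a)$ lying between two elements of $\Gamma(M)$ for every such $g$. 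In the o-minimal case one checks directly that $M(a)$ is bounded by $M$ iff every $g(a)$, with $g$ an $M$-definable function into the home sort, lies between two elements of $M$, and that testing on $N=M(a)$ suffices in the definition.

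Next I would turn these into first-order conditions on the code $\tau(p)=(c_\varphi)_\varphi$ produced in the proof of Proposition \ref{prop:pro_def_eq}. Writing a parameter-free uniform family of functions into $\Gamma$ as $g_s$ and setting $\chi(x;s,w):=(g_s(x)=w)$, the assertion ``$g_s^\Def(p)$ is realized for every parameter $s$'' becomes the single $\cL^{\eq}(M)$-formula $\forall s\,\exists w\, d(\chi)(s,w,z_\chi)$ in the code variable $z_\chi$. For boundedness one uses instead $\chi(x;s,w):=(g_s(x)<w)$ together with its mirror $w<g_s(x)$, and again asks $\forall s\,\exists w$ of the corresponding $\varphi$-definitions. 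Conjoining these formulas as $\chi$ ranges over all parameter-free families of functions into $\Gamma$ (resp. into the home sort) yields a small set $\Theta'$ of $\cL^{\eq}(M)$-conditions on $(c_\varphi)_\varphi$, so that $\widehat{X}(M)$ (resp. $\widetilde{X}(M)$) equals the intersection of $\tau(\D{X}(M))$ with the locus defined by $\Theta'$. This exhibits each as a $\ast$-definable subset, and Corollary \ref{cor:pro_def_eq} delivers pro-definability in $\cL^{\eq}$.

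The step I expect to require the most care is matching the ``generic'' conditions to genuine definable functions: the quantifier $\forall s$ ranges over parameters for which $g_s$ need not be a total function, so one must relativize each conjunct by the scheme-expressible hypothesis that $\chi(x;s,\cdot)$ defines a function on the generic point of $p$, or equivalently normalize the families $g_s$ to be everywhere defined. One must also confirm that the official definitions really collapse to the pushforward formulation: that the auxiliary bounded extension $N$ in the o-minimal case can be taken to be $M(a)$, and that the passage from $\acl$ to $\dcl$ is justified. This last point is precisely where the linear order of $\Gamma$ (or of the home sort) is used, and it is the crux that makes the orthogonality and boundedness conditions expressible by $\cL^{\eq}(M)$-formulas rather than merely by a property of realizations.
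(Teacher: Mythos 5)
Your proposal is correct and follows essentially the same route as the paper: reduce to $\ast$-definability inside $\D{X}(M)$ via Corollary \ref{cor:pro_def_eq}, and cut out $\widetilde{X}(M)$ (resp.\ $\widehat{X}(M)$) by conditions of the form $(\forall z)(\exists y')\,d(\psi)(y',z,z_\psi)$ on the code of $p$, where $\psi$ is the auxiliary formula $(\forall y)(\varphi(x,y,z)\to(-y'<y<y'))$ --- exactly the normalization you flag as needed for non-total families. The only difference is that you also spell out the reduction from the official $\acl$-formulation to the pushforward criterion (via the linear order on $\Gamma$), a step the paper leaves implicit.
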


\begin{proof} 
Both cases are similar, we will just work with the bounded case and leave the others to the reader. By Corollary \ref{cor:pro_def_eq} it suffices to show that $\widetilde{X}(M)$ is $\ast$-definable inside $\D{X}(M)$, for a given $M$-definable set $X$. Suppose $X\subseteq \sS_x(M)$. For every formula $\varphi(x,y,z)$ where $y$ is a $\Gamma$-variable, for every $a\in \sS_z(M)$ such that $\varphi(x,y,a)$ defines a function $f_a\colon X\to \Gamma(M)$, and for every $p(x)\in \widetilde{X}(M)$, there is $\gamma\in \Gamma(M)$ such that $p(x)$ contains the formula $-\gamma< f_a(x) <\gamma$. Let $\psi(x,y',z)$ be the formula 
\[
(\forall y)(\varphi(x,y,z) \rightarrow (-y'<y<y')), 
\]
and $d(\psi)(y',z,z_\psi)$ be its uniform definition. Let $c(p,\psi)$ be the canonical parameter such that $d(\psi)(y',z,c(p,\psi))$ is the $\psi$-definition of $p$. Let $\theta(z_\psi)$ denote the formula $(\forall z)(\exists y') d(\psi)(y',z,z_\psi)$. Then $p\in \widetilde{X}(M)$ if and only if for each formula $\varphi$ and associated formulas $\psi$ and $\theta$ as above   
\[
M\models \theta(c(p,\psi)),  
\]
which shows $\widetilde{X}(M)$ is an $\ast$-definable subset of $\D{X}(M)$. 
\end{proof}

The following question remains open.  

\begin{question*}
Can one characterize NIP theories (or $dp$-minimal theories) having uniform definability of types (resp. definable types are pro-definable)? 
\end{question*}


\bibliographystyle{siam}
\bibliography{biblio}

\end{document}